\documentclass[12pt]{article}
\setlength{\oddsidemargin}{0.0in}
\setlength{\textwidth}{6.5in}
\setlength{\baselineskip}{19pt}
\setlength{\parskip}{19pt}
\pagestyle{plain}
\setlength{\topmargin}{0.0in}
\setlength{\textheight}{8.5in}

\usepackage{epsfig,graphicx,subfigure}
\usepackage{amsmath,amssymb,amsthm} 
\usepackage{epstopdf}
\begin{document}
%
%
\newtheorem{theorem}{Theorem}
\newtheorem{lemma}{Lemma}
\newtheorem{proposition}{Proposition}
\newtheorem{definition}{Definition}
\newtheorem{corollary}{Corollary}

\numberwithin{equation}{section}
\numberwithin{theorem}{section}
\numberwithin{proposition}{section}
\numberwithin{lemma}{section}
\numberwithin{definition}{section}
\numberwithin{corollary}{section}
\numberwithin{figure}{section}
\numberwithin{table}{section}

\newcommand{\beqn}{\begin{equation}}
\newcommand{\eeqn}{\end{equation}}
\newcommand{\nn}{\nonumber}
\newcommand{\la}{\langle}
\newcommand{\ra}{\rangle}
\newcommand{\cleq}{{\preccurlyeq}}

\def\R{{\mathbb R}}
\def\C{{\mathbb{C}}}
\def\pa {{\partial}}
\def\ep{{\epsilon}}
\newcommand{\ve}{\varepsilon}

\def\n{{\bf n}}
\newcommand{\grad}{{\nabla} }

\def\M{{\bf M}}
\def\L{{\cal L}}
\def\m{{\bf m}}
\def\mt{{\tilde{m}}}
\def\mtb{{\bf \tilde{m}}}
\def\Mo{{ \overline M}}
\def\bMo{{ \bf \Mo}}
\def\Mt{{ {\tilde {\bf M}} }}
\def\betat{{ {\tilde \beta} }}

\def\B{{\cal B}}
\def\Bb{{ \overline \B}}
\def\E{{\textbf E}}
\def\P{{\textbf P}}

\def\F{{\cal F}}
\def\T{{\cal T}}
\def\S{{\cal S}}
\def\St{{\overline{\S}}}
\def\Tt{{\overline{\T}}}
\def\Qt{{\overline{Q}}}

\def\D{{\tilde{D}}}
\def\Om{{ \tilde{\Omega}}}

\def\f{{\bf f}}
\def\g{{\bf g}}
\def\p{{\bf p}}
\def\q{{\bf q}}
\def\po{{ \bf \overline{p} }}
\def\qo{{\bf \overline{q}}}
\def\v{{\bf v}}
\def\h{{\bf h}}
\def\a{{\bf a}}
\def\th{{\bf \tilde{h}}}

\def\CdZ{{ \dot{C}^1[0,Z]}}

\title{Determining the twist in an optical fiber}

\author{
Rakesh\thanks{Rakesh was partially supported by NSF grants DMS 0907909, DMS 1312708.}\\
Department of Mathematical Sciences\\
University of Delaware\\
Newark, DE 19716, USA\\
~
\and
Jiahua Tang\\
701 First Avenue\\
Sunnyvale, CA 94089, USA\\
~
\and
Andrew A Lacey\\
School of Mathematical and Computer Sciences,\\
Heriot-Watt University\\
Riccarton, Edinburgh, EH14 4AS, UK}

\date{December 7, 2015}

\maketitle

\begin{abstract}
We determine the twist in a birefringent optical fiber from measurements, at one end of the 
fiber, of the fiber response to an impulsive source at the same end. This is the inverse problem of determining a non-constant 
coefficient, of a first 
order hyperbolic system in one space dimension with two speeds of propagation, from measurements at one end of an interval, of the 
solution of this system corresponding to an impulsive source at the same end. We prove a stability result for this inverse problem and 
give a provable reconstruction algorithm for this inverse problem.

\end{abstract}


%
\section{Introduction}
\noindent
We determine the twist in a birefringent optical fiber from measurements, at one end of the 
fiber, of the fiber response to an impulsive source at the same end.

Consider a birefringent fiber stretching along the $z$ axis, with two channels with different but constant speeds of propagation
 twisting around 
each other with the twist captured by a real valued function $\beta(z)$ on $[0,\infty)$ with $\beta(0)=0$, $\beta'(0)=0$.
The fiber is 
probed by an impulsive source from the left 
end, and the fiber response is measured at the same end (see  Figure \ref{fmodel}). The goal is to determine the twist $\beta(z)$ for 
$z>0$ from the fiber response.
\begin{figure}[!h]
\centering
\epsfig{file=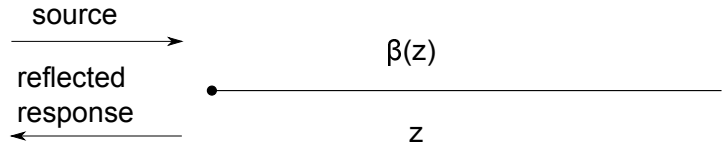, height=0.7in} 
\caption{Fiber model}
\label{fmodel}
\end{figure}

The forward problem was modeled in \cite{mpi2000} and we reproduce this 
derivation in section \ref{sec:model} since it is not readily available. 
Without loss of generality, we assume the two channels have speeds $c$ and $1$ with ${0<c<1}$ and the four 
component vector function ${\bf M}(z,t)$ represents the signal at position $z$ at time $t$ with the $M_1, M_3$ components denoting 
the left moving waves of speeds $1$ and $c$ respectively, and $M_2, M_4$ components the right moving waves of speeds $1$ and $c$ 
(see Figure \ref{lrwave}).
\begin{figure}[!h]
\centering
\epsfig{file=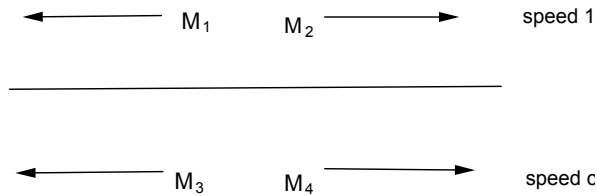, height=1.0in}
\caption{Left and right moving waves}
\label{lrwave}
\end{figure}
The propagation and the reflection of the impulsive source in the twisted fiber is modeled by the solution of the initial boundary
value problem (IBVP) for the  hyperbolic system of PDEs
\begin{subequations}
\begin{align}
{\bf M}_t&=A{\bf M}_z+\beta B{\bf M}, \quad z\ge 0,\quad t\in {\mathbb R}, \label{IVP11} \\
M_2(0,t)&=\delta(t), \quad M_4(0,t)=0,\quad  t \in \R, \label{delta1}\\
{\bf M}(z,t)&={\bf 0},\quad t<0,\quad z\ge 0 \label{IVP21}
\end{align}
\end{subequations}
where 
\beqn
A=\begin{bmatrix}
1 & 0 & 0 & 0\\
0 & -1 & 0 & 0\\
0 & 0 & c & 0\\
0 & 0 & 0 &-c
\end{bmatrix},
\quad 
B=\frac{1}{2}\begin{bmatrix}
0 & 0 & -1-c & -1+c\\
0 & 0 & 1-c & 1+c\\
1+c & -1+c & 0 & 0\\
1-c & -1-c & 0 & 0
\end{bmatrix}. \label{CDcd}
\eeqn
The well-posedness of the IBVP (\ref{IVP11}) - (\ref{IVP21}), for reasonably smooth $\beta(\cdot)$, is stated in Theorem 
\ref{wellposed}.  
The initial and boundary conditions represent a plane wave sent from the left end of the fiber {\bf along the faster channel}
and our goal is to recover the twist $\beta(z)$ given the fiber response, $M_1(0,t)$ and $M_3(0,t)$,
at the left end.

An analysis of the linearization of the map $\beta(\cdot) \to [M_1(0, \cdot), M_3(0, \cdot)]$, around $\beta=0$, is 
instructive. Since the solution of (\ref{IVP11})-(\ref{IVP21}) corresponding $\beta=0$ is $[0, \delta(t-z),0, 0]$, the 
linearization of the above map around $\beta=0$ is the map
\[
d \beta  \to [ dM_1(0, \cdot), dM_3(0, \cdot)]
\]
where $d\M(z,t)$ is the solution of the IBVP
\begin{align*}
(d{\bf M})_t-A(d{\bf M})_z=(d\beta) B [0, \delta(t-z),0, 0],  & \qquad (z,t) \in [0, \infty) \times \R 
\\
dM_2(0,t)=0, ~~dM_4(0,t)=0, & \qquad t \in \R
\\
d{\bf M}(z,t)=0, & \qquad t<0, ~z \in [0, \infty).
\end{align*}
Solving this IBVP one obtains that $ dM_1(0,t)=0$ and
\begin{align}
(d M)_3(0,t)=\frac{c-1}{2(c+1)}(d\beta)\left(\frac{ct}{1+c}\right)H(t)
\label{eq:dm3}
\end{align}
where $H(t)$ is the Heaviside function, so the linearization of the above mentioned map is
\[
d \beta (z) \to [0, dM_3(0,t)]
\]
with $dM_3(0,t)$ given by (\ref{eq:dm3}). The analysis of this linearized map suggests that, for the original (nonlinear) 
problem, to recover $\beta(z)$ on $[0,Z]$ one may need only $M_3(0,t)$ for all $t$ in $[0, Z(1+c)/c]$. 

Unfortunately, our results do not meet our expectations because our results require knowledge of both 
$M_1(0,t)$ and $M_3(0,t)$. Theorem \ref{stab} gives a stability result (and hence a uniqueness result) for the inverse problem and 
Theorem \ref{thm:recon} 
asserts that  we can reconstruct $\beta( \cdot)$ if we are given both $M_1(0, \cdot)$ and $M_3(0, \cdot)$ and an upper bound on
the $L^2$ norm of $\beta$.

Below $l \cleq r$ will mean $l \leq C r$ for some constant $C$, we define the operator
\begin{align*}
{\cal L} :=I\partial_t-A\partial_z-\beta B
\end{align*} 
and, for any $Z>0$, we define
\[
\dot{C}^1[0,Z] = \{ \beta \in C^1[0,Z] \, | \, \beta(0)=0, \, \beta'(0)=0 \}, \qquad Y = \frac{ 2cZ}{1+c}.
\]

Our first result addresses the well-posedness of the IBVP (\ref{IVP11})-(\ref{IVP21}).
\begin{theorem}[Well-posedness]\label{wellposed}
If $\beta \in \dot{C}^1[0,Z]$ then (\ref{IVP11})-(\ref{IVP21}) has a unique 
solution\footnote{on the region $\{(z,t) \, : \, 0 \leq z \leq Z, ~ z+t \leq 2Z \}$ - see Figure \ref{fig:maxnorm}}
\[
\M(z,t) = \delta(t-z) [0,1,0,0] + \m(z,t) H(t-z)
\]
where $\m(z,t)$ is the unique $C^1$ solution of the characteristic boundary value problem (CBVP)
\begin{subequations}
\begin{align}
\m_t&=A \m_z + \beta B \m, ~\qquad \text{on} ~0 \leq z \leq t \leq 2Z-z, \label{compt} \\
m_2(0,t)&=m_4(0,t)=0,\quad 0 \le t \le 2Z, \label{eq:m1bc}\\
m_1(z,z)=0,\quad m_3(z,z)&=\frac{c-1}{2(c+1)}\beta(z),\quad m_4(z,z)=\frac{c+1}{2(c-1)}\beta(z),\quad 0 \leq z \leq Z. 
\label{compb}
\end{align}
\end{subequations}
\end{theorem}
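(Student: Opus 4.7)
The plan is to substitute the proposed ansatz into $\L\M=0$ so as to derive the CBVP for $\m$ by matching singular parts, then solve the CBVP via the method of characteristics and a Picard/Volterra iteration, and finally verify that the assembled $\M$ solves the IBVP distributionally. The heart of the argument is existence, uniqueness, and $C^1$ regularity of $\m$ on the triangle $\{(z,t):0\le z\le t\le 2Z-z\}$. First I would substitute $\M(z,t)=\delta(t-z)\v(z)+\m(z,t)H(t-z)$ and separate by singular order. Matching the $\delta'(t-z)$ coefficient forces $(I+A)\v=0$, and since $\ker(I+A)$ is one-dimensional and spanned by $[0,1,0,0]$ we get $\v(z)=v_2(z)[0,1,0,0]$. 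Matching the $\delta(t-z)$ coefficient yields
\[
(I+A)\m(z,z) = A\v'(z)+\beta(z) B\v(z);
\]
its second component forces $v_2'\equiv 0$, and combined with $M_2(0,t)=\delta(t)$ this pins down $v_2\equiv 1$, while the remaining three components reproduce exactly the characteristic data (\ref{compb}). The regular part of $\L\M=0$ gives the PDE (\ref{compt}), and $M_2(0,t)=\delta(t)$, $M_4(0,t)=0$ translate to (\ref{eq:m1bc}). Conversely, any $C^1$ solution of the CBVP, reinstated in the ansatz, produces a distributional solution of the IBVP, so uniqueness of $\M$ in the stated class reduces to uniqueness of $\m$.

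Next I would construct $\m$ by integrating along characteristics. The left-moving components $m_1,m_3$ along $z+t=\mathrm{const}$ and $z+ct=\mathrm{const}$ and the right-moving $m_4$ along $ct-z=\mathrm{const}$ carry data prescribed on $t=z$, while the right-moving $m_2$ along $t-z=\mathrm{const}$ carries data prescribed on $z=0$. Integrating the PDE along these characteristics rewrites the CBVP as a coupled Volterra system
\[
\m=\m_0+T\m,
\]
where $\m_0$ encodes the boundary data and $T$ is a bounded linear operator whose integrand carries a factor of $\beta$. Since the triangle is compact and $T$ is Volterra in $t$, iteration produces $\|T^n\|\cleq (C\|\beta\|_\infty Z)^n/n!$, the Neumann series converges in $C^0$, and the unique continuous fixed point is the solution. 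Differentiating the integral equation and using $\beta\in C^1$ then promotes $\m$ to $C^1$ in the interior of the triangle.

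The hard part will be $C^1$ regularity up to the corner $(0,0)$, where the two boundary pieces $z=0$ and $t=z$ meet. Only $m_4$ is prescribed on both; continuity at the corner is free from $\beta(0)=0$, but $C^1$ compatibility requires consistency of the first derivatives. Using $m_4(0,t)\equiv 0$, differentiating $m_4(z,z)=\tfrac{c+1}{2(c-1)}\beta(z)$ along $t=z$, and evaluating the fourth row of $\m_t=A\m_z+\beta B\m$ at $(0,0)$ yields $-c\cdot\tfrac{c+1}{2(c-1)}\beta'(0)=0$, so $\beta'(0)=0$ is precisely the compatibility condition needed, which is exactly the content of $\beta\in\dot{C}^1[0,Z]$. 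With this condition in hand, a careful one-sided expansion of the characteristic integrals near the corner, together with the interior $C^1$ regularity, extends $\m$ to a $C^1$ function on the closed triangle, and direct substitution confirms that $\M$ assembled from the ansatz solves the IBVP distributionally.
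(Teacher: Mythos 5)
Your overall strategy coincides with the paper's: a progressing-wave ansatz whose singular parts force the characteristic data (your computation of $(I+A)\v=0$, of the jump conditions (\ref{compb}), and of the compatibility conditions $\beta(0)=\beta'(0)=0$ at the corner all check out), followed by integration along characteristics and a Volterra/Picard iteration. However, there is one genuine gap, and it sits exactly where the paper does its real work. You locate the regularity difficulty at the corner $(0,0)$ "where the two boundary pieces meet," but the difficulty is not confined to the corner. The backward $m_4$-characteristic through a point $P(z,t)$ of the triangle terminates on the line $t=z$ when $ct\le z\le t$ and on the line $z=0$ when $z\le ct$; consequently the zeroth-order term of your Volterra system, and hence the representation of $m_4$, is \emph{piecewise-defined across the interior characteristic $z=ct$}, and any mismatch of the data or its derivatives at the origin propagates along that entire ray into the interior of the triangle. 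Your claim that "differentiating the integral equation promotes $\m$ to $C^1$ in the interior" is therefore precisely the assertion that needs proof on $z=ct$, and "a careful one-sided expansion near the corner" does not reach it.

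The paper confronts this head-on: it splits the triangle into $D_1=\{ct\le z\le t\}$ and $D_2=\{z\le ct\}$, writes the solution as a characteristic \emph{transmission} problem across $z=ct$ with continuity of the first three components imposed there (eqs. (\ref{moregent})--(\ref{moregenb})), derives the matching conditions (\ref{matching}) (which reduce to $\beta(0)=0$, $\beta'(0)=0$ for the data at hand), and then verifies that the right-hand sides of the resulting integral equations have matching first derivatives from both sides of $z=ct$, so that the fixed point is $C^1$ on all of $D$. To complete your argument you would need to add this step: write the $m_4$ equation in its two-piece form, and check that under (\ref{matching}) the one-sided $z$- and $t$-derivatives of the integral representation agree along $z=ct$ (not merely at the origin). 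With that addition your proof matches the paper's.
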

Theorem \ref{wellposed} is valid only for those $\beta$ with $\beta(0)=0, ~ \beta'(0)=0$ - see the definition of
 $\dot{C}^1[0,Z]$, because these are 
 forced\footnote{Use (\ref{compt})-(\ref{compb}) for $m_3, m_4$.
The condition $\beta(0)=0$ is natural because it represents an untwisted fiber at the $z=0$ end. 
The condition $\beta'(0)=0$ is not natural 
and perhaps could be avoided if we work with $\beta$ in the optimal regularity class, but that is unknown.} 
by the matching conditions if $\m$ is to be $C^1$. 

The methods in this article can be modified to show that if $\beta \in L^2[0,Z]$ then the 
CBVP (\ref{compt})-(\ref{compb}) 
has a weak solution which is locally 
$L^2$ on the region $t \geq z \geq 0$ and has local $L^2$ traces on lines parallel to the $z$ or the $t$ axes. 
This would
be needed 
for a complete solution of our inverse problem but we do not prove this result here because we are unable to
complete 
other parts of the solution of this inverse problem, as explained later.

Since $M_1(0,t)$ and $M_3(0,t)$ are zero for $t<0$ and $M_1(0,t) = m_1(0,t)$, $M_3(0,t) = m_3(0,t)$ for $t \geq 0$,
we will freely switch between $M_1(0,\cdot), M_3(0, \cdot)$ and $m_1(0, \cdot), m_3(0, \cdot)$ on the interval $[0, \infty)$.

Our next result shows that if the source is initiated in the fast channel then the reflected boundary data from both channels, 
over the time interval $[0,2Z]$, is enough to 
stably distinguish the twist function $\beta(z)$, up to a depth $Y$. Note that, a signal originating at $z=0$ at time $t=0$, traveling at 
the fast speed $1$, and reflected at $z=Y$ with the slower speed $c$, will just make it back to $z=0$ at time $t=2Z$ - see Figure 
\ref{fig:maxnorm}.
\begin{figure}[!h]
\centering
\epsfig{file=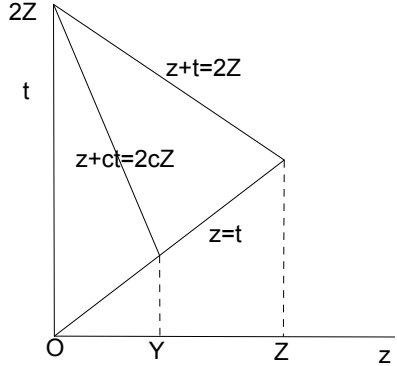, height=1.9in}
\caption{Depth sensed in time $2Z$}
\label{fig:maxnorm}
\end{figure}
This theorem is relevant for the reconstruction of $\beta(\cdot)$ from the data.
For arbitrary $ K, Z>0$  define
\begin{align*}
\B_K &:=\{\beta \in \dot{C}^1[0,Z]~ |~ \|\beta(\cdot)\|_{L^2[0,Z]}^2\le K\}.
\end{align*}

\begin{theorem}[Injectivity and Stability] \label{stab}
If ${\bf M}, {\tilde {\bf M}}$ are the solutions of (\ref{IVP11})-(\ref{IVP21}) corresponding to 
$\beta, {\tilde \beta}\in \B_K$ then 
\begin{align*}
\|(\beta-{\tilde \beta})(\cdot)\|_{L^2[0,Y]}^2
\, \cleq \, 
\|(M_1- {\tilde M}_1)(0,\cdot)\|_{L^2[0,2Z]}^2+\|(M_3- {\tilde M}_3)(0,\cdot)\|_{L^2[0,2Z]}^2
\end{align*}
where the constant depends only on $c,Z$ and $K$.
\end{theorem}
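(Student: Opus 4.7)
The plan is to let $\mathbf{w} = \m - \mt$ and $\gamma = \beta - \tilde\beta$ and, by subtracting the two CBVPs of Theorem~\ref{wellposed}, note that $\mathbf{w}$ solves $\L \mathbf{w} = \gamma B \mt$ on the triangle
\[
\Omega := \{(z,t) : 0 \le z,\; z \le t \le 2Z - z/c\}
\]
(vertices $(0,0), (Y,Y), (0,2Z)$), with $w_2(0,\cdot) = w_4(0,\cdot) = 0$, $w_1(z,z) = 0$, $w_3(z,z) = \tfrac{c-1}{2(c+1)}\gamma(z)$, $w_4(z,z) = \tfrac{c+1}{2(c-1)}\gamma(z)$. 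The idea is to couple a Volterra integral equation for $\gamma$ with an energy estimate for $\mathbf{w}$ on $\Omega$ and close the loop by Gronwall.

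Integrating the third row of $\L\mathbf{w} = \gamma B\mt$ along the $w_3$-characteristic from $(z,z)$ to $(0, z(1+c)/c)$ and using the leading-edge value of $w_3$ yields
\[
\gamma(z) = \tfrac{2(c+1)}{c-1}\, w_3\bigl(0, \tfrac{(1+c)z}{c}\bigr) \;-\; \tfrac{2(c+1)}{c(c-1)} \int_0^z \bigl[\beta (B\mathbf{w})_3 + \gamma (B\mt)_3\bigr]\bigl(\zeta, \tfrac{(c+1)z - \zeta}{c}\bigr) d\zeta .
\]
Squaring, applying Cauchy--Schwarz with $\|\beta\|_{L^2}^2 \le K$ on the $\beta(B\mathbf{w})_3$ term, and integrating in $z$ over $[0,Z']$ bounds $\|\gamma\|_{L^2[0,Z']}^2$ by $\|w_3(0,\cdot)\|_{L^2[0,2Z]}^2$, a term $\cleq \|\mathbf{w}\|_{L^2(\Omega_{Z'})}^2$, and a Gronwall-friendly tail $\cleq \int_0^{Z'} \|\gamma\|_{L^2[0,z]}^2 dz$, assuming an a~priori bound on suitable $L^2$-norms of $\mt$ on characteristic slices in terms of $K$, $c$, $Z$.

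For the energy estimate, the key structural observation is that $B$ is \emph{skew-symmetric}: a direct check from~(\ref{CDcd}) gives $B^T = -B$, hence $\mathbf{w}^T B \mathbf{w} \equiv 0$. Contracting $\L\mathbf{w} = \gamma B\mt$ with $\mathbf{w}^T$ therefore reduces to
\[
\tfrac12 \partial_t |\mathbf{w}|^2 - \tfrac12 \partial_z (\mathbf{w}^T A \mathbf{w}) = \gamma\, \mathbf{w}^T B \mt ,
\]
with \emph{no} $\beta$-dependent quadratic term---crucial, since only an $L^2$ bound on $\beta$ is assumed. Applying Green's theorem on $\Omega$, the leading edge $t = z$ contributes a positive multiple $C_0 \int_0^Y \gamma(z)^2 dz$ (via the characteristic conditions), the left edge $z=0$ contributes $\tfrac12 \int_0^{2Z}(w_1^2 + cw_3^2)(0,t)\, dt$ (exactly the data norm), and the top characteristic edge $t = 2Z - z/c$ contributes mixed-sign terms in $w_1, w_2, w_4$ along that line. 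Since $w_2, w_4$ vanish at $z=0$, integrating their component PDEs along their right-moving characteristics from the left boundary gives pointwise Volterra representations whose Cauchy--Schwarz bounds control the $L^2$-norms of $w_2, w_4$ on the top edge by $\|\gamma\|_{L^2[0,Y]}^2$ and $\|\mathbf{w}\|_{L^2(\Omega)}^2$. The same skew-symmetric energy identity applied to the forward CBVP supplies the needed a~priori bound on $\mt$, depending only on $K, c, Z$.

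Assembling the Volterra identity for $\gamma$ with the energy inequality for $\mathbf{w}$ produces coupled integral inequalities of the form
\[
\|\gamma\|_{L^2[0,Z']}^2 + \|\mathbf{w}\|_{L^2(\Omega_{Z'})}^2 \;\cleq\; \|w_1(0,\cdot)\|_{L^2[0,2Z]}^2 + \|w_3(0,\cdot)\|_{L^2[0,2Z]}^2 + \int_0^{Z'} \bigl(\|\gamma\|_{L^2[0,z]}^2 + \|\mathbf{w}\|_{L^2(\Omega_z)}^2\bigr) dz,
\]
(with $\Omega_z := \Omega \cap \{(\zeta,\tau) : \zeta \le z\}$), to which Gronwall's inequality applies at $Z' = Y$ to yield the stated estimate. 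The main obstacle I anticipate is the top characteristic edge of $\Omega$: controlling its $w_2, w_4$ contributions without introducing unabsorbable $\gamma^2$ or $\|\mathbf{w}\|^2$ terms requires both the skew-symmetry of $B$ (to eliminate the $\beta$-quadratic in the energy) and careful Volterra bookkeeping for $w_2, w_4$ along their right-moving characteristics from the data-free boundary $\{z=0\}$.
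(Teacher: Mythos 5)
Your setup (subtracting the two CBVPs, using $B^T=-B$, and reading $\gamma=\beta-\tilde\beta$ off the leading edge via the characteristic values of $w_3,w_4$) matches the paper's, but the energy argument you propose does not close, and the obstacle you yourself flag at the end is exactly where it breaks. With the symmetric multiplier $\mathbf{w}^T$, the divergence identity integrated over the full triangle puts the top characteristic edge terms $(1+c)w_2^2+2cw_4^2$ on the wrong side of the inequality, and your plan to control them by integrating the $w_2,w_4$ equations along right\--moving characteristics from $z=0$ produces, after Cauchy--Schwarz, a term of the form $C(K,c,Z)\,\|\gamma\|^2_{L^2[0,Y]}$ with a constant of order $K$ times an a priori bound on $\mt$. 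This cannot be absorbed into the leading\--edge term, which is only a \emph{fixed} multiple of $\int_0^Y\gamma^2$. Nor can you rescue it by localizing in $z$ and running Gronwall: the symmetric multiplier's $z$\--flux $w_1^2-w_2^2+cw_3^2-cw_4^2$ is indefinite, so the identity gives no control of $\mathbf{w}$ on vertical slices, and hence the quantity $\|\mathbf{w}\|^2_{L^2(\Omega_{Z'})}$ appearing on the left of your final coupled inequality is never actually produced by your argument. Trying instead to bound $\mathbf{w}$ purely by characteristic (Volterra) representations is circular, because the backward $w_1$\--characteristics emanating from the upper part of the data edge $\{z=0,\ t\in(2Y,2Z)\}$ exit the region through the top edge, where $w_1$ is one of the unknowns you are trying to control.

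The paper's Lemma \ref{lemma:energy} resolves precisely this by abandoning the symmetric multiplier in favor of $\q=[p_1,\,-p_2,\,p_3,\,-\ep p_4]^T$. The sign flips make the $z$\--flux $p_1^2+p_2^2+cp_3^2+c\ep p_4^2$ positive definite (so a genuine sideways energy $J(\p,z)$ exists and Gronwall in $z$ applies), and they simultaneously flip the top\--edge contribution to $(1-c)p_1^2+(1+c)p_2^2+2c\ep p_4^2\ge 0$, which can simply be dropped from the good side of (\ref{LHSO}). The price is a leading\--edge term $2p_1^2+(1+c)p_3^2-\ep(1-c)p_4^2$ with a negative piece, which is salvaged by taking $\ep\le c(1-c)^3/(1+c)^4$ together with the ratio $p_3=\frac{(c-1)^2}{(c+1)^2}p_4$ forced by (\ref{compb}); and the $\beta$\--quadratic term $\q^TB\p$ no longer vanishes, but it is harmless since $4|\beta|\,|\p|^2$ is absorbed by Gronwall using only $\|\beta\|_{L^2}\le\sqrt K$. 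So your premise that one must keep the symmetric multiplier to avoid a $\beta$\--dependent quadratic is a red herring, and it is what prevents your argument from closing. Your Volterra identity for $\gamma$ along the slow left\--moving characteristic is a sound observation, but in the paper it is the engine of the reconstruction (Theorem 1.3), not of the stability estimate, where the leading\--edge flux already delivers $\int_0^Y\gamma^2$ directly.
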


Define the forward (nonlinear) map
\begin{align*}
{\F}: \CdZ &\rightarrow  C^1[0,2Z]\times C^1[0,2Z],\\
\beta(z)&\mapsto [m_1(0,t), m_3(0,t)]
\end{align*}
which maps the coefficient to the full reflection data. Theorem \ref{stab} guarantees that ${\F}$ is injective 
and ${\F}^{-1}$ is continuous in the appropriate norms, at least when $\beta$ is restricted to the interval $[0,Y]$. 
Our main goal is to invert $\F$ and we state our result in the 
following  theorem. Again note that given $m_1(0,t), m_3(0,t)$ over $[0,2Z]$, one recovers $\beta( \cdot)$ only on 
$[0, Y]$ and not on the whole interval $[0,Z]$.
\begin{theorem}[Reconstruction] \label{thm:recon}
If $\beta \in \dot{C}^1[0,Z]$ and $\m(z,t)$ is the corresponding solution of  (\ref{compt})-(\ref{compb}) then
given $(m_1(0,t), m_3(0,t))$ for all $t \in [0,2Z]$, one can reconstruct $\beta(\cdot)$ over the interval $[0,Y]$, if 
an upper bound on $\|\beta\|_{L^2[0,Y]}$ is also provided. 
\end{theorem}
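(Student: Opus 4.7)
The plan is to convert the CBVP (\ref{compt})-(\ref{compb}), now with the coefficient $\beta$ treated as an additional unknown, into a coupled nonlinear Volterra system on the triangle $\T = \{(z,t) : 0 \le z \le t \le 2Z - z\}$. By integrating each component of $\m_t = A\m_z + \beta B \m$ along its own characteristic and using the boundary values at $z = 0$ (the measured $m_1(0,\cdot)$, $m_3(0,\cdot)$ and the zero values $m_2(0,\cdot) = m_4(0,\cdot) = 0$), I would obtain four Volterra integral expressions for $m_1, m_2, m_3, m_4$ in terms of $\beta$ and of the components themselves. For $m_4$ the backward characteristic through $(z,t)$ may first strike the diagonal $t = z$ rather than $z = 0$, and in that case the matching condition $m_4(z,z) = \tfrac{c+1}{2(c-1)}\beta(z)$ serves as boundary data.

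The crucial additional equation is obtained by specialising the $m_3$ characteristic integral to the diagonal point $(z,z)$ and using the matching condition $m_3(z,z) = \tfrac{c-1}{2(c+1)}\beta(z)$. Solving for $\beta(z)$ gives
\begin{align*}
\beta(z) \;=\; \frac{2(c+1)}{c-1}\, m_3\!\left(0,\tfrac{(c+1)z}{c}\right) \;-\; \frac{c+1}{c(c-1)} \int_0^z \beta(s)\bigl[(1+c)m_1 + (c-1)m_2\bigr]\!\bigl(s,\, z + \tfrac{z-s}{c}\bigr)\, ds.
\end{align*}
The argument $(c+1)z/c$ of $m_3(0,\cdot)$ lies in $[0,2Z]$ exactly when $z \in [0,Y]$, which is precisely the recovery depth stated in the theorem. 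Together with the four characteristic Volterra equations, this gives a closed nonlinear system for $(\beta, \m)$ driven entirely by the measured boundary data on $[0, 2Z]$.

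Reconstruction then proceeds by Picard iteration on this system. Initialise $\beta^{(0)} \equiv 0$, and let $\m^{(0)}$ be the corresponding solution of the Volterra equations for $\m$. At stage $n$, compute $\beta^{(n+1)}$ from the diagonal formula using the measured $m_3(0,\cdot)$ and the current interior values $m_1^{(n)}, m_2^{(n)}$, then update $\m^{(n+1)}$ by solving the four characteristic Volterra equations with coefficient $\beta^{(n+1)}$ and the measured boundary data. I would establish convergence via a contraction in the exponentially weighted norm $\|f\|_\lambda := \sup_{z \in [0,Y]} e^{-\lambda z}|f(z)|$, with $\lambda = \lambda(c, Z, K)$ chosen large enough to make the Volterra kernel strictly contractive; uniqueness of the limit follows already from Theorem \ref{stab}.

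I expect the main obstacle to be closing the contraction when only the $L^2$ norm of $\beta$ is controlled, since the nonlinearity is the product $\beta \cdot m_i$. Before the weighted contraction can be set up, uniform-in-$n$ $L^\infty$ bounds for $\m^{(n)}$ on $\T$ must be established; I would obtain these by a Gronwall-type estimate along characteristics combined with Cauchy--Schwarz applied to the $\int \beta \cdot m$ terms, using the a priori bound $\|\beta\|_{L^2[0,Y]} \le K$ to turn the $L^2$ control of $\beta$ into a finite bound on the Volterra kernel. Given such uniform bounds on $\m^{(n)}$, the map $\beta \mapsto \m$ is Lipschitz on $\B_K$, and taking $\lambda$ sufficiently large in the weighted norm yields a contraction, so the Picard iterates converge to the unique $\beta \in \B_K$ consistent with the measured data and produce the desired reconstruction on $[0,Y]$.
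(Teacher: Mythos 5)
Your overall strategy is the same as the paper's: treat the problem sideways in $z$, with all four components of ${\bf m}$ given data at $z=0$ (the two measured traces plus the two zero boundary conditions), integrate along characteristics to get a Volterra-in-$z$ system, and close it with a fixed-point equation for $\beta$ coming from the diagonal condition $m_3(z,z)=\tfrac{c-1}{2(c+1)}\beta(z)$; your observation that the $m_3$-characteristic from $(z,z)$ meets $z=0$ at $t=(1+c)z/c$, which forces the recovery depth $Y=2cZ/(1+c)$, is exactly the geometry the paper uses. The paper differs in two executional respects: it eliminates $\beta$ from the diagonal data by using the $\beta$-free relation $h_3(z,z)=\tfrac{(c-1)^2}{(c+1)^2}h_4(z,z)$ so that the sideways CBVP is well posed for \emph{any} trial $\beta$, and it controls everything through the $L^2$ sideways energy $J({\bf p},z)$ of Lemma \ref{lemma:energy} rather than through pointwise estimates along characteristics, which is what lets all constants depend only on the given bound $\|\beta\|_{L^2}\le K$ and forces the extension to weak $L^2$ solutions.

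The genuine gap is in your global contraction. A contraction argument needs an invariant set as well as a Lipschitz constant below one, and your iteration does not visibly preserve any ball whose radius is computable from the data and $K$. The update for $\beta^{(n+1)}$ involves $\int_0^z\beta^{(n)}\cdot m_i^{(n)}$, and the uniform bound you need on ${\bf m}^{(n)}$ is itself of Gronwall type, of order $e^{C\|\beta^{(n)}\|_{L^1}}$; so a bound $L$ on the iterates propagates only if roughly $Y\bigl(A+BL\,e^{CL}\bigr)\le L$, which fails once $Y$ or the data is not small. The exponentially weighted norm $\sup_z e^{-\lambda z}|f(z)|$ does not break this circularity, because the $L^1$ (or $L^2$) norm of an element of a weighted-norm ball is only controlled up to a factor $e^{\lambda Y}$, which blows up precisely as you send $\lambda\to\infty$ to force the contraction. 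This is why the paper reconstructs piecewise: Proposition \ref{localRec} establishes self-mapping and contraction only on an interval of length $\delta$, with the smallness of $\delta$ (computable as $\delta_*$ from $c,Z,K$ and the data via the energy estimate) used to make the integral term subordinate to the data term $J_X$, and then marches from $X$ to $X+\delta$ using the reconstructed ${\bf m}(X+\delta,\cdot)$ as fresh data. Your scheme becomes correct if you localize it in the same way (local contraction on $[X,X+\delta]$ plus continuation with a uniform step size), but as written the one-shot global fixed point on $[0,Y]$ is not justified.
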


Along with the inversion of $\F$, it is important to characterize the range of $\F$. Necessary conditions similar to those in \cite{BI02} 
may be derived but they are far from sufficient for our problem.  Actually $\dot{C}^1[0,Z]$ is not the appropriate domain for $\F$ and 
the optimal answer will be obtained by studying the inversion and the range characterization of the map
 $\beta \to m_3(0, \cdot)$ rather than that of $\F$. We expect $L^2[0,Z]$ to be best suited for the domain of these maps. In our problem, 
the medium is probed by a 
source wave traveling at the faster speed - see the boundary conditions (\ref{delta1}). It would be interesting to 
also study the problem when the source wave travels at the slower speed (the boundary conditions
are changed to $M_2(0,t)=0, ~ M_4(0,t) = \delta(t)$). Unfortunately, we have no results for this case because of the complications due to the presence of precursor 
waves as noted by Belishev in his work; we will say more about this in the literature review next.

Inverse problems for hyperbolic PDE, in one space dimension, with a single speed of propagation, have been 
studied by Gelfand, Levitan, Marchenko, Krein, Blagoveschentski and many others; \cite{Burridge1980305} and Browning's 
thesis \cite{brown} contain a thorough survey of these results. Inverse problems for hyperbolic systems, in one space 
dimension,
with multiple speeds of propagation have been studied by Belishev and his collaborators (see \cite{belishev1997two}, 
\cite{BI02}, \cite{BI03} and 
specially \cite{Belsurvey} for an introduction to the method used by them), by Nizhnik and his collaborators (see  \cite{nizhnik1988}),  
and others; please see \cite{MR2575362} for a brief survey.
Inverse problems for hyperbolic PDEs with multiple speeds of 
propagation present challenges because of the presence of precursor waves. If the inital wave is an impulsive wave travelling 
with a slower speed, then an interaction with the medium (coefficients) may result in a smoother wave moving at a faster 
speed which reaches points in the medium before the more singular initial wave reaches there - this is the precursor wave. Since
techniques used for inverse problems for single speed problems rely on the most singular wave arriving first or at the same 
time as the slower wave, new techniques need to be developed to solve the slower impulsive wave inverse problem. In \cite{belishev1997two}, Belishev et al made an important 
observation and showed the way for solving inverse problems for multi-speed hyperbolic systems, which we describe next.

Define the diagonal matrix $D=\begin{bmatrix} 1 & 0 \\ 0 & c^2 \end{bmatrix}$ with $0<c<1$ and let $P(z), Q(z)$ be 
arbitrary $2\times 2$ matrices. For arbitrary $f_1(t), f_2(t)$, let ${\bf v}(z,t)\in {\mathbb R}^2$ be the solution of the two speed IBVP
\begin{subequations}
\begin{align}
{\bf v}_{tt}- D {\bf v}_{zz}-P{\bf v}_z-Q{\bf v}&=0, \quad  (z,t)\in [0,\infty)\times {\mathbb R}, \label{IBVPt}\\
{\bf v}&=0, \quad  t<0, \\
{\bf v}(0,t)&=[f_1(t), ~f_2(t)]^T, \quad t\in {\mathbb R}. \label{IBVPb}
\end{align}
\end{subequations}
If $f_1, f_2$ are supported in the region $t\ge0$ then, because of the finite speed of propagation, ${\bf v}(z,t)$ is supported in 
the fast region $t \ge z$. 
In \cite{belishev1997two}, Belishev et al showed that there exists a unique $l(\cdot)$ such that ${\bf v}$ is supported in the 
slow region $ct\ge  z$ if $f_2=l*f_1$, where $*$ represents convolution  - see Figure \ref{fig:twospeed}. 
\begin{figure}[!h]
\centering
\epsfig{file=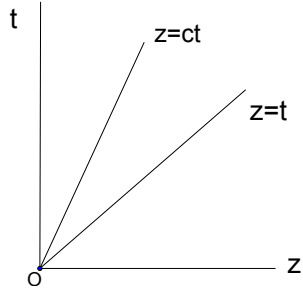, height=1.5in}
\caption{Fast and slow regions}
\label{fig:twospeed}
\end{figure}
Then, in \cite{belishev1997two}, \cite{BI02}, they considered the following inverse problem for a two speed 
hyperbolic system. Let $U(z,t)$ be the $2 \times 2$ matrix 
solution of the impulsive IBVP
\begin{subequations}
\begin{align}
U_{tt}-DU_{zz}-PU_z-QU&=0, \quad (z,t)\in [0,\infty)\times {\mathbb R}, \label{BIt}\\
U&=0, \quad  t<0, \\
U(0,t)&=\delta(t)I_2, \quad t\in {\mathbb R}; \label{BIb}
\end{align}
\end{subequations}
here $I_2$ is the $2\times 2$ identity matrix.
Their goal was the recovery of the coefficients of $P(z)$ and $Q(z)$ over some interval $[0,Z]$, given $U_z(0,t)$ for all $t$ in
some interval $[0,T]$. The problem as stated is under-determined and is under-determined even if we assume the differential 
operator is self-adjoint - that is if the diagonal entries of $P$ are zero and $Q-Q^T=P'$. Belishev et al showed that, in the self-
adjoint case, one can recover $P(z)$ and $Q(z)$ if one is given $l(\cdot)$ in addition to $U_z(0,\cdot)$. They also had a 
data characterization result in \cite{BI02} which is summarized in the introduction of \cite{MR2575362}.

For our problem, the goal is inversion without knowledge of $l(\cdot)$. In this direction, in \cite{BI03}, Belishev et al showed 
that if only $U_z(0, \cdot)$ is given (and $l(\cdot)$ is not given) and some of the coefficients of $P(z), Q(z)$ are known then
$l(t)$ can be recovered over a small interval $[0, \delta]$ and hence the remaining coefficients of $P(z)$ and $Q(z)$ could be
recovered over a small interval. This result was used by Morassi et al in \cite{mns05} to prove a uniqueness result. Please see 
the introduction to \cite{MR2575362} for a summary of these results. The recovery of $l(\cdot)$ over the full interval is an open 
question. 

The article \cite{MR2575362} also studies the recovery of $P,Q$ from $U_z(0,\cdot)$ without knowledge of $l(t)$; a 
stability result is proved if some of the coefficients of $P,Q$ are known but no reconstruction is provided. Please see the article 
for a careful statement. 

Our work focuses on the reconstruction of a single coefficient of a two speed hyperbolic system without the knowledge of $l(t)$. 
We are given less data but we have to recover only one coefficient $\beta(z)$. We have borrowed ideas for inverse 
problems for single speed hyperbolic problems in \cite{MR706374}, \cite{MR860922}. Normally this would fail for
 two speed
problems for reasons pointed out above but due to the special structure of our problem we have succeeded in applying single 
speed 
ideas to our problem and proved Theorems \ref{stab} and \ref{thm:recon}.

This article is partly based on some of the work in the PhD thesis of Jiahua Tang.                         
%
%
\section{Proof of Theorem \ref{wellposed}}\label{sec:forward}

We show that the IBVP (\ref{IVP11})-(\ref{IVP21}) is well posed. The solution $\M(z,t)$ is a distribution 
and it will be 
useful to express it in terms of standard 
distributions and well behaved functions. Using the progressing wave expansion and proceeding in a fashion similar to the 
derivation of Theorem 3 in \cite{MR2575362}, we can show that (see Figure \ref{geobc2}),
\begin{align}
{\bf M}(z,t)=\delta(t-z)[0,1,0,0]^T+{\bf q}(z,t)H(t-z/c)+{\bf p}(z,t)(H(t-z)-H(t-z/c)) \label{Mc<1}
\end{align}
where $\p(z,t), \q(z,t)$  is the solution of the characteristic transmission BVP 
\begin{subequations}
\begin{align}
{\cal L} {\bf p}&={\bf 0} \quad \text{on} ~ 0\le ct\le z\le t, \label{IBVPc<1t} \\
{\cal L} {\bf q}&={\bf 0} \quad \text{on} ~ 0\le z\le ct,\\
p_1(z,z)=0,\quad p_3(z,z)&=\frac{c-1}{2(1+c)}\beta(z),\quad p_4(z,z)=\frac{1+c}{2(c-1)}\beta(z),\quad z\ge 0,\\
(q_1-p_1)(z,t)=(q_2-p_2)(z,t)&=(q_3-p_3)(z,t)=0 \quad \text{on} ~ z=ct,~z\ge 0, \\
q_2(0,t)&=q_4(0,t)=0,\quad t\ge 0. \label{IBVPc<1b}
\end{align}
\end{subequations}
\begin{figure}
\centering
\epsfig{file=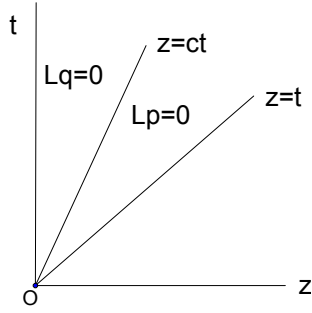, height=1.6in}
\caption{Subregions}
\label{geobc2}
\end{figure}

For $Z>0$, define (see Figure \ref{d1d2_3})
\begin{align*}
D_1 &:=\{(z,t)~ |~ ct\le z\le t, z+t\le 2Z\}, \\
D_2 &:=\{(z,t)~ |~  0\le z\le ct , z+t\le 2Z\},\\
D &:=D_1\cup D_2.
\end{align*}
\begin{figure}[h]
\centering
\epsfig{file=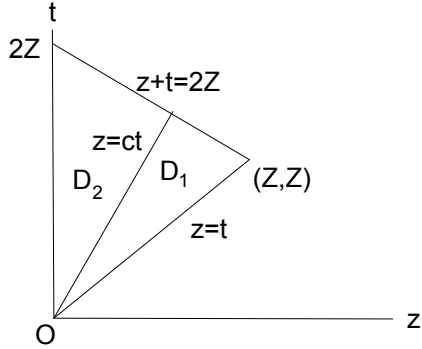, height=1.8in}
\caption{$D_1$ and $D_2$}
\label{d1d2_3}
\end{figure}

The well-posedness of  (\ref{IBVPc<1t})-(\ref{IBVPc<1b})  will follow from the well-posedness
of the following general characteristic transmission BVP.
\begin{subequations}
\begin{align}
{\cal L} {\bf f}&={\bf 0}~ \text{in}~ D_1, \label{moregent}\\
{\cal L} {\bf g}&={\bf 0}~ \text{in} ~ D_2,\label{mg2}\\
(g_i-f_i)(ct,t)&= 0,\quad i = 1,2,3,\quad t\in \left[0, \frac{2Z}{1+c}\right], \label{mg3} \\
f_1(t,t)=b_1(t),\quad f_3(t,t)&=b_3(t),\quad f_4(t,t)=b_4(t),\quad t\in [0,Z], \\
g_2(0,t)=e_2(t),\quad g_4(0,t)&=e_4(t),\quad t\in [0,2Z]. \label{moregenb}
\end{align}
\end{subequations}
One may verify that if $\f \cup \g$ is in $C^1(D)$ and $\beta(0)=0$ then
\beqn
 b_4(0)=e_4(0), ~~ 2(1-c)e_4'(0)=(1-c)b_1(0)-  2cb_4'(0)  - (1+c)e_2(0).
\label{matching}
\eeqn
We have the following result regarding the well-posedness of (\ref{moregent})-(\ref{moregenb}).

\begin{proposition}[Existence of $C^1$ solutions] \label{moregen1}
If $\beta(\cdot) \in \dot{C}^1[0,2Z]$,  $b_i(t) \in C^1[0,Z]$, and $e_i(t)\in C^1[0,2Z]$ and satisfy (\ref{matching})
then there exists a unique solution ${\bf f}\in C^1(D_1), {\bf g}\in C^1(D_2)$ of $(\ref{moregent})-(\ref{moregenb})$
with $\|{\bf f}\|_{C^1}$, $\|{\bf g}\|_{C^1}$  bounded above by a function of $c,Z$
and
$ N=\max ( \|\beta\|_{C^0}, \|b_i\|_{C^1}, \|e_i\|_{C^1} ).$
Further $\f \cup \g$ is a $C^1$ function on $D$.
\end{proposition}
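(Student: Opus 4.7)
The plan is to reduce the characteristic transmission BVP (\ref{moregent})--(\ref{moregenb}) to a coupled system of Volterra integral equations by integrating each component's transport equation along its characteristic, and then to solve by Picard iteration. Since $A$ is diagonal, $\mathcal{L}$ decouples into four scalar transport equations with speeds $\pm 1, \pm c$, coupled only through the zero-order term $\beta B\mathbf{f}$. Tracing backward from an interior point of $D_1$, the characteristics of $f_1, f_3, f_4$ reach the boundary $z=t$ (carrying $b_1, b_3, b_4$), while the characteristic of $f_2$ reaches the interface $z=ct$ (carrying the transmission value $g_2$); in $D_2$, the characteristics of $g_2, g_4$ reach $z=0$ (carrying $e_2, e_4$) while those of $g_1, g_3$ reach $z=ct$. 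Duhamel's formula along each characteristic then produces an eight-dimensional linear Volterra system of the form $f_i(z,t)=[\text{boundary datum}]+\int\beta(B\mathbf{f})_i$, and similarly for $g_i$, with coupling controlled by $\|\beta\|_{C^0}\|B\|$. A standard Picard iteration in a sup norm weighted by $e^{-\lambda t}$ (with $\lambda$ large) supplies a unique $C^0$ fixed point on $D$, with the claimed norm bound in terms of $N$, $c$, and $Z$. Formally differentiating the integral equations and iterating again delivers the $C^1$-regularity on each region; the conditions (\ref{matching}) are precisely what force the one-sided derivative limits from the different pieces of boundary data to agree at the corner $(0,0)$ so this second iteration closes.

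Next I would verify that $\mathbf{f}\cup\mathbf{g}$ is $C^1$ across the interface $z=ct$. For $i=1,2,3$ continuity is built in by the transmission conditions, and $C^1$-matching follows by the usual argument: differentiating the identity $f_i(ct,t)=g_i(ct,t)$ in $t$ gives one tangential relation, and the PDE (identical on both sides) gives another. These two equations determine both $\partial_t$ and $\partial_z$ uniquely because the relevant characteristic speeds $\pm 1, -c$ all differ from the interface slope $c$. For $i=4$ the situation is genuinely degenerate: the line $z=ct$ is itself a characteristic of the $f_4$ and $g_4$ equations. Along this line the PDE reduces to a scalar ODE in $t$ whose right-hand side involves $f_1,f_2,f_3$ (already continuous across the interface), so the ODE is identical on the two sides, and the first part of (\ref{matching}) supplies the common initial value $b_4(0)=e_4(0)$, forcing $f_4=g_4$ all along $z=ct$. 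The transverse derivatives $\partial_z f_4,\partial_z g_4$ on the interface satisfy their own transport equation along the same characteristic, started at $(0,0)$, and a short computation that differentiates the four boundary/transmission relations at the corner and uses $\beta(0)=0$ shows that the two corner values of the transverse derivative agree precisely when the second line of (\ref{matching}) holds.

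The main obstacle is this last step, the $C^1$-matching across $z=ct$ for the fourth component. The non-characteristic trick used for $i=1,2,3$ breaks down because the PDE no longer supplies an independent relation transverse to the interface, so continuity of the normal derivative cannot be extracted from value matching plus the PDE. Instead one must transport the matching information along the characteristic from the corner, and doing the corresponding bookkeeping -- matching $\partial_z f_4(0,0)$ computed from the $D_1$-side Picard iteration to $\partial_z g_4(0,0)$ from the $D_2$-side -- is where (\ref{matching}) is used in its strongest form, and is what dictates its exact form. All other ingredients are routine hyperbolic fixed-point theory.
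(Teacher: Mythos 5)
Your proposal is correct and follows essentially the same route as the paper's proof: reduce (\ref{moregent})--(\ref{moregenb}) to a Volterra system of integral equations by integrating along the characteristics (the paper's (\ref{v1})--(\ref{v4}), where only the fourth component is genuinely piecewise across $z=ct$), solve by a standard iteration, and then check that the one-sided derivatives match on $z=ct$. The paper states the $C^1$-matching as a one-line verification, so your detailed treatment of the fourth component --- whose characteristic coincides with the interface, forcing the matching data to be transported from the corner where (\ref{matching}) enters --- simply fills in what the paper leaves to the reader.
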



\begin{proof}[Proof of Proposition \ref{moregen1}]
The existence of the solution will be reduced to the solution of an integral equation. Below $\v(z,t)$ will 
represent a 4 component vector function on $D_1 \cup D_2$ and ${\bf r}({\bf v},z,t) =\beta(z) B{\bf v}(z,t)$.

By integrating (\ref{moregent})-(\ref{mg2}) along the characteristics and using the boundary conditions, we
may show that the existence of a classical solution of (\ref{moregent})-(\ref{mg2}) reduces to solving
the system of integral equations (see Figure \ref{PD12})
\begin{subequations}
\begin{align}
v_1(z,t)&=
      \int_{s_H}^tr_1({\bf v},z+t-s,s)~ds+b_1(s_H), \qquad~ \text{if} ~ P \in D
 \label{v1} \\
v_2(z,t)&=
       \int_{s_E}^tr_2({\bf v},z+s-t,s)~ds+e_2(s_E), \qquad ~ \text{if}~ P \in D
 \label{v2}\\
v_3(z,t)&=
       \int_{s_G}^tr_3({\bf v},z+ct-cs,s)~ds+b_3(s_G), \qquad~ \text{if}~ P \in D
 \label{v3} \\
v_4(z,t)&=\left\{
     \begin{array}{lr}
       \int_{s_F}^tr_4({\bf v},z+cs-ct,s)~ds+b_4(s_F), \qquad ~ \text{if} ~ P \in D_1\\
       \int_{s_F}^tr_4({\bf v},z+cs-ct,s)~ds+e_4(s_F), \qquad~ \text{if}~ P \in D_2
     \end{array}
\right.
\label{v4}
\end{align}
\end{subequations}
where $s_E, s_F, S_G, S_H$ are the $s$ coordinates of the points $E,F, G, H$ in the $(y,s)$ plane in Figure \ref{PD12}.
\begin{figure}[h]
\centering
\subfigure{
\epsfig{file=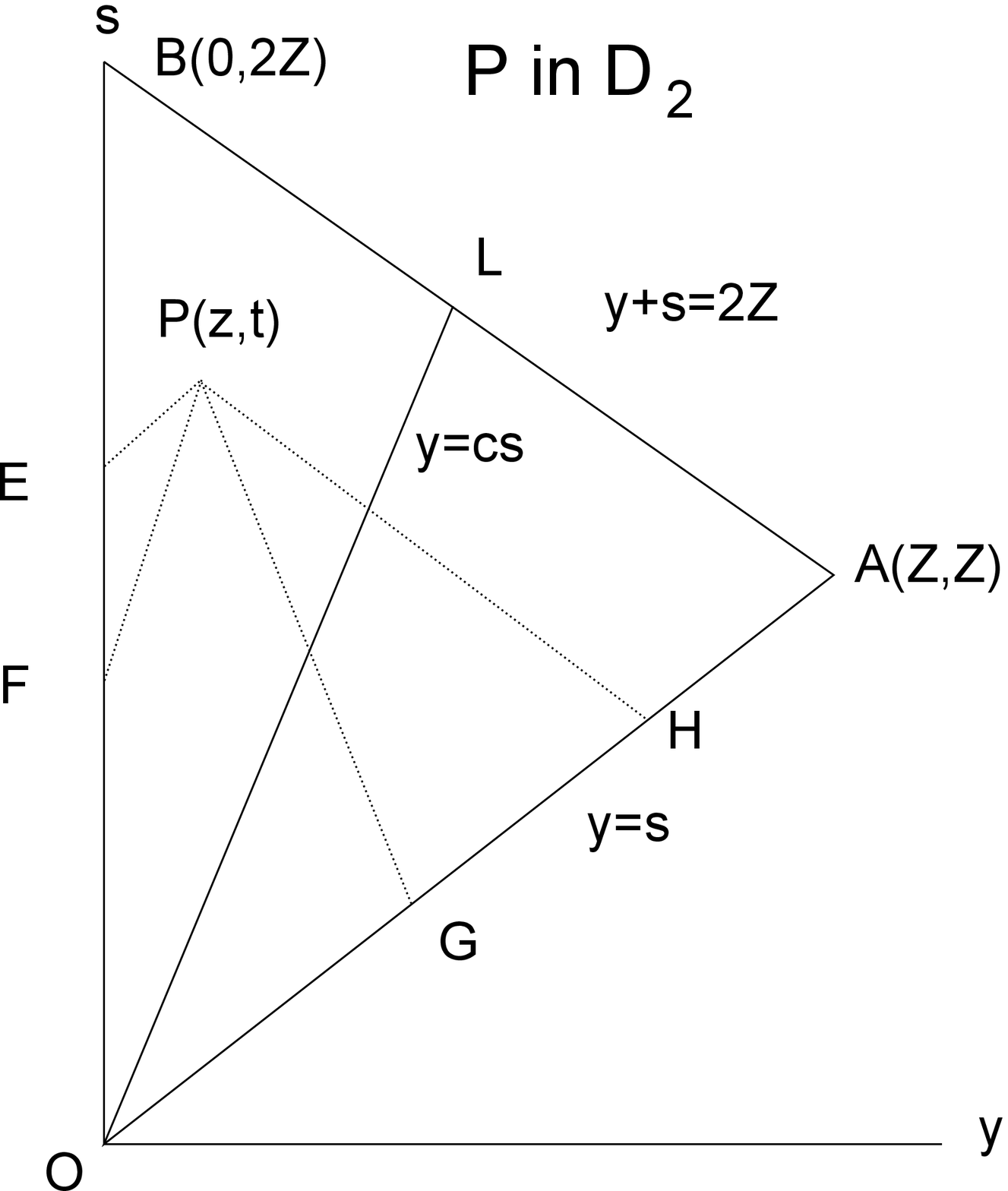, height=2.4in}
}
\hspace{0.3in}
\subfigure{
\epsfig{file=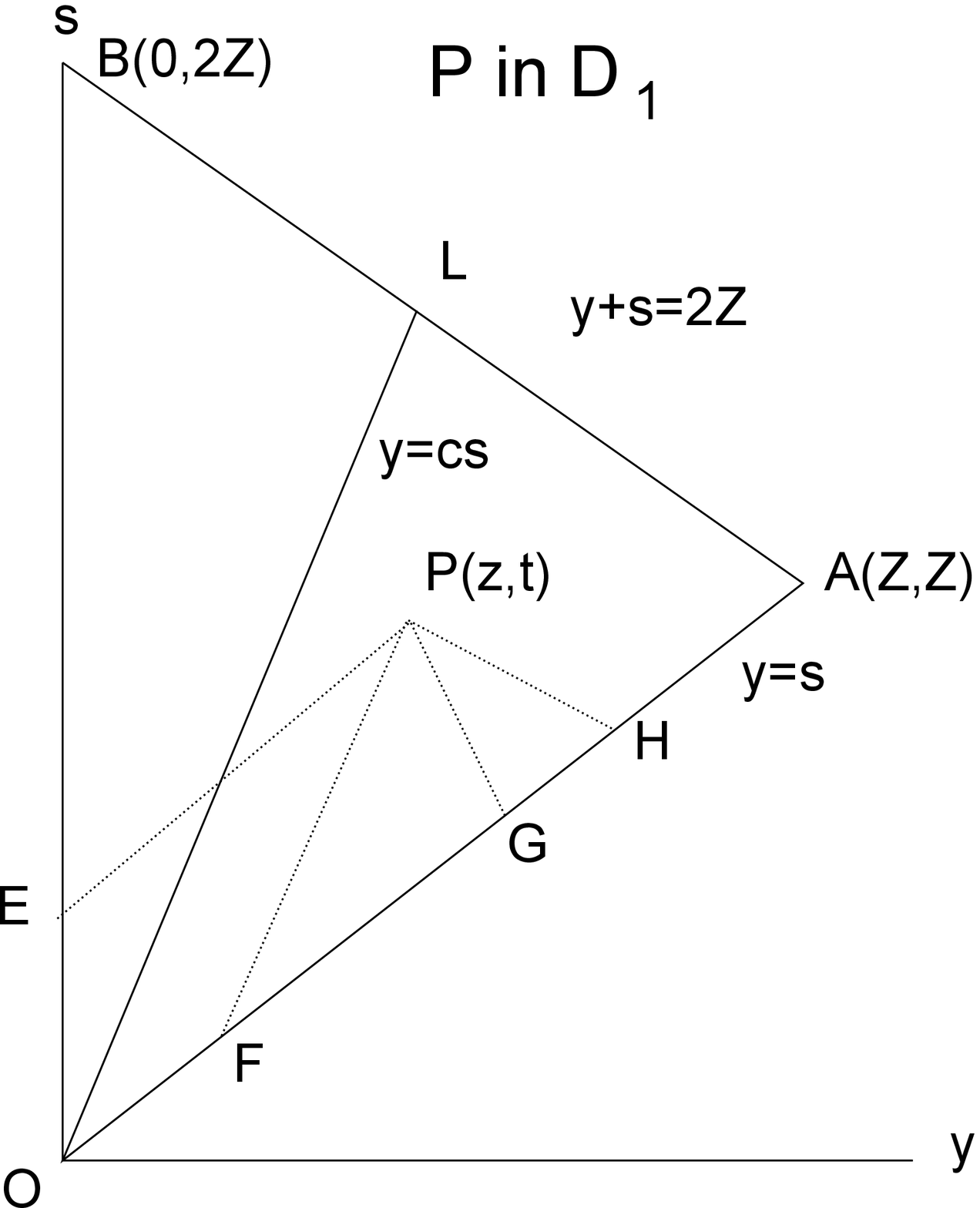, height=2.4in}
}
\caption{Downward moving lines through $P(z,t)$ with slopes $\pm 1$ and $\pm 1/c$}
\label{PD12}
\end{figure}

If $\v$ is in $C^1(D)$ then one may verify that the RHS of 
(\ref{v1})-(\ref{v4}) is in $C^1(D)$ - the first order derivatives on $z=ct$ match as one approaches this line from the two 
different sides. Further, the system of integral equations is a Volterra type equation so the 
existence and uniqueness of a $C^1$ solution may be proved by standard arguments for Volterra equations or one may use the 
method in section 2.5 of \cite{MR1185075}.
\end{proof}



\section{Proof of Theorem \ref{stab}}

We will use the following identity in several places in this article: for arbitrary four dimensional $C^1$ vector functions
 ${\bf u}(z,t), {\bf v}(z,t)$,   since $B^T=-B$, we can verify that
\begin{align}
{\bf u}^T({\cal L} {\bf v})+({\cal L} {\bf u})^T{\bf v}=({\bf u}^T{\bf v})_t-({\bf u}^TA{\bf v})_z.
\label{eq:ulv}
\end{align}
\begin{figure}[h]
\centering
\epsfig{file=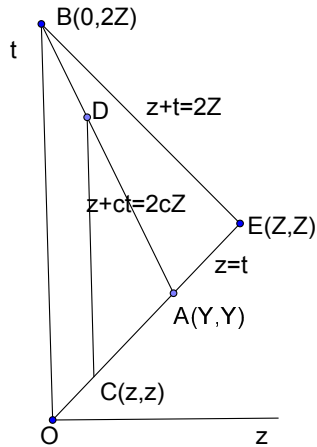, height=2.3in}
\caption{Regions used for the energy function}
\label{CDest}
\end{figure}
The proofs of Theorem \ref{stab} and Theorem \ref{thm:recon} will use sideways energy estimates given below.
Define (the triangle OAB in Figure \ref{CDest})
\begin{align*}
{\tilde D} :=\{(z,t)~|~0\le z\le Y, z\le t\le 2Z-z/c\}
\end{align*}
and for any 
4 dimensional vector function $\p(z,t) \in C^1(\D)$, $\ep>0$ define 
\begin{align*}
\text{(sideways energy)} \qquad J({\bf p},z) :=\int_{CD}(p_1^2+p_2^2+cp_3^2+c\epsilon p_4^2)(z,t)~dt,\quad z\in [0,Y],
\\
|{\bf p}(z,t)|^2 := \sum_{i=1}^4p_i^2(z,t),
\qquad
|({\cal L} {\bf p})(z,t)|^2 := \sum_{i=1}^4| ({\cal L} \p)_i|^2(z,t),\quad (z,t)\in {\tilde D}.
\end{align*}

\begin{lemma}[Sideways energy estimate]\label{lemma:energy}
If $\beta \in L^2[0,Y]$,  ${\bf p}(z,t)\in C^1({\tilde D})$, then for every $\lambda >0$, $\epsilon\in (0,1]$ and $z\in [0,Y]$ 
$\left(\text{see Figure \ref{CDest}} \right)$ 
\begin{align}
J({\bf p},z)&+\int_{OC}(2p_1^2+(1+c)p_3^2-\epsilon(1-c)p_4^2)(y,y) ~dy 
\nn\\
\le & J({\bf p},0)+\lambda \iint_{OCDB}|({\cal L} {\bf p})(y,t)|^2~dx \, dt + \frac{1}{c\epsilon}\int_0^z\left(4|\beta(y)|+\frac{1}{\lambda}\right)J({\bf p},y)~dy.  \label{LRHSO}
\end{align}
Furthermore, if $\epsilon \le \frac{c(1-c)^3}{(1+c)^4}$ and ${\bf p}$ satisfies
\begin{subequations}
\begin{align}
{\cal L} {\bf p}&=0,\quad \text{in}~{\tilde D},\label{poundt}\\
p_3(z,z)&=\frac{(c-1)^2}{(c+1)^2} \, p_4(z,z),\quad z\in [0,Y],\label{poundb}
\end{align}
\end{subequations}
then 
\begin{align}
J({\bf p},z) +  \int_{OC} p_3^2 \, dy
 \le e^{4\sqrt{Y \|\beta\|}/(c\epsilon)}J({\bf p},0),\quad z\in [0,Y]    \label{jmz04}
\end{align}
where $\|\beta\|$ is the $L^2$ norm of $\beta$ on $[0,Y]$.
\end{lemma}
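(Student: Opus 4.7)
The plan is to differentiate the sideways energy $J(\p,z)$ in $z$, use the PDE to convert the resulting bulk $\pa_z$-integral into a $\pa_t$-integral plus lower-order corrections, and then integrate in $z$; this yields (\ref{LRHSO}) almost directly, after which (\ref{jmz04}) follows from Gronwall applied to a special case of (\ref{LRHSO}).

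For the first step I would apply Leibniz to $J(\p,z) = \int_z^{2Z-z/c}(p_1^2+p_2^2+cp_3^2+c\ep p_4^2)(z,t)\,dt$, producing two boundary terms at $t=z$ and $t=2Z-z/c$ plus a bulk integral of $\pa_z(\cdots)$. For the bulk I would use the PDE row by row, $(p_i)_z = a_i^{-1}[(p_i)_t - \beta(B\p)_i - (\L\p)_i]$ (with $a_i$ the $i$-th diagonal entry of $A$), multiply by $2w_ip_i$ with weights $(w_1,w_2,w_3,w_4)=(1,1,c,c\ep)$, and sum, obtaining the identity
\[
\pa_z(p_1^2+p_2^2+cp_3^2+c\ep p_4^2) = \pa_t(p_1^2-p_2^2+p_3^2-\ep p_4^2) - 2\beta S_1 - 2S_2,
\]
where $S_1 = p_1(B\p)_1 - p_2(B\p)_2 + p_3(B\p)_3 - \ep p_4(B\p)_4$ and $S_2$ is the same with $\L\p$ in place of $B\p$. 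The $\pa_t$-piece integrates by the fundamental theorem of calculus and combines with the two Leibniz boundary terms: at $t=z$ the sum collapses exactly to $-(2p_1^2+(1+c)p_3^2-\ep(1-c)p_4^2)(z,z)$, which matches the OC integrand in (\ref{LRHSO}); at $t=2Z-z/c$ the sum collapses to $-\frac{1}{c}[(1-c)p_1^2+(1+c)p_2^2+2c\ep p_4^2]$, which is non-positive and is discarded.

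Integrating in $z$ from $0$ to $z$ then gives
\[
J(\p,z) + \int_0^z(2p_1^2+(1+c)p_3^2-\ep(1-c)p_4^2)(y,y)\,dy \le J(\p,0) + 2\iint_{OCDB}(|\beta||S_1|+|S_2|)\,dy\,dt.
\]
For $S_1$ I would expand $(B\p)_i$ explicitly (the $p_1p_3$ cross term cancels) and apply $2|p_ip_j|\le p_i^2+p_j^2$ to obtain $|S_1|\le C|\p|^2$ with $C$ depending only on $c$; since $c,\ep\in(0,1]$, the elementwise comparison $|\p|^2\le (c\ep)^{-1}(p_1^2+p_2^2+cp_3^2+c\ep p_4^2)$ holds, so $2|\beta||S_1|$ integrates to $(4/c\ep)\int_0^z|\beta(y)|J(\p,y)\,dy$. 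For $S_2$ I would use componentwise Young, $2|p_i||(\L\p)_i|\le \lambda|(\L\p)_i|^2 + p_i^2/\lambda$, combined with the same density comparison, to obtain $\lambda\iint|\L\p|^2 + (c\ep\lambda)^{-1}\int_0^z J(\p,y)\,dy$. Adding these yields exactly (\ref{LRHSO}).

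For (\ref{jmz04}) I would use the boundary condition (\ref{poundb}), $p_3(y,y)=\frac{(c-1)^2}{(c+1)^2}p_4(y,y)$, to rewrite the OC integrand as $\bigl[(1+c)-\ep(1+c)^4/(1-c)^3\bigr]p_3^2(y,y)$ plus the nonnegative $2p_1^2$; the hypothesis $\ep\le c(1-c)^3/(1+c)^4$ is precisely what forces the bracket to be $\ge 1$, so the OC integrand dominates $p_3^2(y,y)$. Since $\L\p=0$ by (\ref{poundt}), letting $\lambda\to\infty$ in (\ref{LRHSO}) removes both the $\iint|\L\p|^2$ term and the $1/\lambda$ piece of the kernel, leaving $J(\p,z)+\int_0^z p_3^2(y,y)\,dy \le J(\p,0) + (4/c\ep)\int_0^z |\beta(y)| J(\p,y)\,dy$. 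A direct Gronwall estimate on $J$ alone, followed by the Cauchy--Schwarz bound $\|\beta\|_{L^1[0,z]}\le\sqrt{Y}\|\beta\|_{L^2[0,Y]}$, delivers (\ref{jmz04}). The main obstacle is verifying that the single choice of weights $(1,1,c,c\ep)$ simultaneously produces (a) the asymmetric OC integrand demanded by the lemma, (b) a definite-sign (non-positive) DB contribution so it can be dropped, and (c) an OC lower bound by $p_3^2$ under (\ref{poundb}); the threshold $\ep=c(1-c)^3/(1+c)^4$ is forced by (c) once the weights are pinned down by (a) and (b).
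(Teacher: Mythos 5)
Your proposal is correct and takes essentially the same route as the paper: your weighted identity with weights $(1,1,c,c\epsilon)$ is precisely the paper's multiplication of ${\cal L}{\bf p}$ by $-2{\bf q}^T$ with ${\bf q}=[p_1,-p_2,p_3,-\epsilon p_4]^T$ (the paper integrates the resulting divergence identity over $OCDB$ and applies the divergence theorem rather than differentiating $J$ by Leibniz, but the boundary terms on $OC$ and $DB$, the bounds on your $S_1,S_2$, the threshold for $\epsilon$, and the Gronwall step all coincide). The only discrepancy is that Cauchy--Schwarz yields the exponent $4\sqrt{Y}\,\|\beta\|/(c\epsilon)$, exactly as in your argument, so the exponent $4\sqrt{Y\|\beta\|}/(c\epsilon)$ printed in (\ref{jmz04}) appears to be a typo in the statement rather than a defect in your proof.
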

%
%
We postpone the proof of Lemma \ref{lemma:energy} to subsection \ref{subsec:lemma} and continue with the proof of 
Theorem \ref{stab}.

Let ${\bf m}$, ${\bf \tilde m}$ be the solutions of $(\ref{compt})-(\ref{compb})$ corresponding to $\beta$, ${\tilde 
\beta}$ in $\B_K$. Since $\tilde{\L} \mtb =0$ and $\mtb^T B \mtb=0$ because $B^T=-B$, we have
\begin{align*}
0 &=  2\mtb^T \tilde{\L} \mtb  = 2\mtb^T \mtb_t - 2 \mtb^T A \mtb_z -2 \tilde{\beta} \, \mtb^T B \mtb = 
2\mtb^T \mtb_t - 2\mtb^T A \mtb_z
\\
& = (\mt_1^2 + \mt_2^2 + \mt_3^2 + \mt_4^2)_t - (\mt_1^2 - \mt_2^2 + c\mt_3^2 -c \mt_4^2)_z.
\end{align*}
Integrating this over the triangular region OEB (see Figure \ref{CDest}) and noting (\ref{eq:m1bc}), we obtain
\begin{align*}
 \int_{OE} 2\mt_1^2 + (1+c) \mt_3^2 + (1-c) \mt_4^2 \, dz
=  \int_{EB} 2 \mt_2^2 + (1-c) \mt_3^2 + (1+c) \mt_4^2 \, dt
+ \int_{OB} \mt_1^2 + c \mt_3^2 \, dt.
\end{align*}
Hence, using (\ref{compb}), we have
\beqn
J( \mtb, 0) \leq  \int_{OE} 2\mt_1^2 + (1+c) \mt_3^2 + (1-c) \mt_4^2 \, dz
= \frac{1+3c^2}{2 (1-c^2)} \int_0^Z \betat(z)^2 \, dz \leq  \frac{1+3c^2}{2 (1-c^2)} \, K.
\label{eq:Jm0}
\eeqn
Next, applying Lemma \ref{lemma:energy} with $\p$ replaced by $\mtb$, $\beta$ replaced by $\betat$
and $\ep = c (1-c)^3/(1+c)^4$ - note that 
(\ref{poundt}), (\ref{poundb}) hold - from (\ref{jmz04}) and (\ref{eq:Jm0}) we obtain
\beqn
J( \mtb, z) \leq \frac{1+3c^2}{2 (1-c^2)} \,  K \,  e^{4\sqrt{Y} K^{1/4}/(c\epsilon)} = C_0 ~ (\text{define}).
\label{eq:Jmzest}
\eeqn
 
Define ${\bf p}={\bf m}-{\bf \tilde m}$;  then ${\bf p}$ satisfies ${\cal L} {\bf p}=(\beta-{\tilde \beta})B{\bf 
\tilde m}$ and, from (\ref{compb}), we have
\beqn
p_3(z,z)=\frac{(c-1)^2}{(c+1)^2} \,p_4(z,z) = \frac{ c-1}{2 (c+1)} ( \beta - \tilde{\beta})(z,z), \qquad z \in [0,Y].
\label{eq:pmmt}
\eeqn
Choose $\epsilon=\frac{c(1-c)^3}{(1+c)^4}$, then
\begin{align*}
\int_{OC}((1+c)p_3^2-\epsilon(1-c)p_4^2)~dy=
\int_{OC} p_3^2 \, dy = \frac{(1-c)^2}{4(1+c)^2}\int_{OC}(\beta-{\tilde \beta})^2(y)~dy,
\end{align*}
so from $(\ref{LRHSO})$ in Lemma \ref{lemma:energy} and $(\ref{eq:Jmzest})$ we have
\begin{align}
J({\bf p},z)&+\frac{(1-c)^2}{4(1+c)^2}\int_0^z(\beta-{\tilde \beta})^2(y)~dy
\nn\\
& \le J({\bf p},0)+4 \lambda \int_0^z (\beta-{\tilde \beta})^2(y) \, J(\mtb,y) ~dy + 
\frac{1}{c\epsilon}\int_0^z\left(4|\beta(z)|+\frac{1}{\lambda}\right)J({\bf p},y)~dy
 \nn \\
& \le J({\bf p},0)+4C_0\lambda \int_0^z(\beta-{\tilde \beta})^2(y)~dy + 
\frac{1}{c\epsilon}\int_0^z\left(4|\beta(z)|+\frac{1}{\lambda}\right)J({\bf p},y)~dy.
 \label{cheese4}
\end{align}
Choose $\lambda=\frac{(1-c)^2}{32C_0(1+c)^2}$, then from $(\ref{cheese4})$ we have
\begin{align}
J({\bf p},z)+\frac{(1-c)^2}{8(1+c)^2} \int_0^z(\beta-{\tilde \beta})^2(y)~dy
\le J({\bf p},0)+\frac{1}{c\epsilon}\int_0^z\left(4|\beta(z)|+\frac{1}{\lambda}\right)J({\bf p},y)~dy;
 \label{jpz4}
\end{align}
hence from Gronwall's inequality
\begin{align*}
\int_0^Y(\beta-{\tilde \beta})^2(y)~dy\preceq J({\bf p},0)\le \int_0^{2T}((m_1-{\tilde m}_1)^2+(m_3-{\tilde m}_3)^2)(0,t)~dt,
\end{align*}
with the constant dependent only on $c,Z,K$.
This completes the proof of Theorem \ref{stab}.
%


\subsection{Proof of Lemma \ref{lemma:energy}}\label{subsec:lemma}

Define ${\bf q} :=[p_1,~-p_2,~p_3,~-\epsilon p_4]^T$; multiplying both sides of ${\cal L} {\bf p}={\bf p}_t-A{\bf p}_z-\beta B{\bf p}$ by $-2{\bf q}^T$, we have
\begin{align}
-2{\bf q}^T({\cal L} {\bf p}+\beta B{\bf p})=&2{\bf q}^T(A{\bf p}_z-{\bf p}_t)\nn \\
=&(p_1^2+p_2^2+cp_3^2+\epsilon cp_4^2)_z-(p_1^2-p_2^2+p_3^2-\epsilon p_4^2)_t. \label{LRHS}
\end{align}
Integrating the RHS of $(\ref{LRHS})$ over $OCDB$, we have
\begin{align}
&\iint_{OCDB}(p_1^2+p_2^2+cp_3^2+c\epsilon p_4^2)_z-(p_1^2-p_2^2+p_3^2-\epsilon p_4^2)_t~dx \, dt\nn\\
=&\int_{\partial OCDB}(p_1^2-p_2^2+p_3^2-\epsilon p_4^2)~dz+\int_{\partial OCDB}(p_1^2+p_2^2+cp_3^2+c\epsilon p_4^2)~dt\nn\\
=&\int_{OC}(p_1^2+p_2^2+cp_3^2+c\epsilon p_4^2)~dt+J({\bf p},z)-J({\bf p},0)+\int_{DB}(p_1^2+p_2^2+cp_3^2+c\epsilon p_4^2)~dt\nn\\
&-\int_{DB}c(p_1^2-p_2^2+p_3^2-\epsilon p_4^2)~dt+\int_{OC}(p_1^2-p_2^2+p_3^2-\epsilon p_4^2)~dt\nn\\
=&\int_{DB}((1-c)p_1^2+(1+c)p_2^2+2c\epsilon p_4^2)~dt+\int_{OC}(2p_1^2+(1+c)p_3^2-\epsilon(1-c)p_4^2)~dt\nn\\
&+J({\bf p},z)-J({\bf p},0). \label{LHSO}
\end{align}
Also
\begin{align}
\iint_{OCDB}&\text{LHS of (\ref{LRHS})}~dx \, dt\nn\\
&\le \iint_{OCDB}\left(\frac{|{\bf p}(y,t)|^2}{\lambda}+\lambda|({\cal L} {\bf p})(y,t)|^2+4|\beta(y)|\cdot |{\bf p}(y,t)|^2\right)~dx \, dt\nn\\
&\le \lambda\iint_{OCDB}|({\cal L} {\bf p})(y,t)|^2~dx \, dt 
+\frac{1}{c\epsilon}\int_0^z\left(4|\beta(y)|+\frac{1}{\lambda}\right)J({\bf p},y)~dy
 \label{RHSO}
\end{align}
for all $\lambda >0$ and $\epsilon\in (0,1]$, so $(\ref{LRHSO})$ follows directly from $(\ref{LHSO})-(\ref{RHSO})$.\\\\
If ${\bf p}$ satisfies $(\ref{poundt})-(\ref{poundb})$ and $\epsilon \le \frac{c(1-c)^3}{(1+c)^4}$, then
\begin{align*}
\int_{OC}(2p_1^2+(1+c)p_3^2-\epsilon(1-c)p_4^2)~dy\ge  \int_{OC} p_3^2 \, dy.
\end{align*}
In $(\ref{LRHSO})$ using $(\ref{poundt})$ and letting $\lambda\to \infty$, we have
\begin{align*}
J({\bf p},z) + \int_{OC} p_3^2 \, dy
\le J({\bf p},0)+\frac{4}{c\epsilon}\int_0^z|\beta(y)|J({\bf p},y)~dy,
\end{align*}
so $(\ref{jmz04})$ follows directly from Gronwall's inequality.
%


\section{Proof of Theorem \ref{thm:recon}}

For arbitrary $Z>0$, $K>0$, define $Y=\dfrac{2cZ}{1+c}$ and (see Figure \ref{OCAB})
\begin{align*}
\B_K &:=\{\beta \in \dot{C}^1[0,Z]~ |~ \|\beta(\cdot)\|_{L^2[0,Z]}^2\le K\},
\\
\Omega &:=OAB=\{(z,t)~|~ z\ge 0, ~z\le t, ~z+t\le 2Z\}, 
\\
\Om &:=OCB=\{(z,t)~|~0\le z\le Y, z\le t\le 2Z-z/c\}.
\end{align*}
\begin{figure}[!h] 
\centering
\epsfig{file=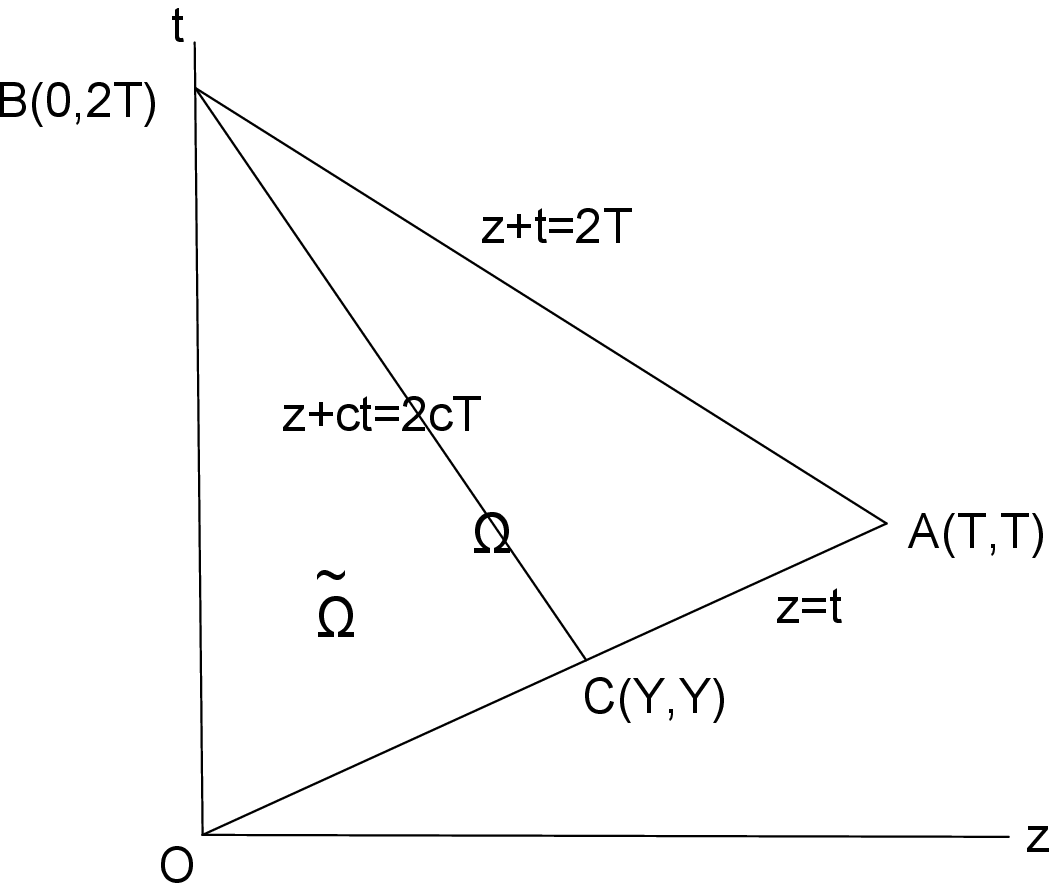, height=2in}
\caption{$\Omega$ and ${\Om}$} 
\label{OCAB}
\end{figure}
\noindent
Because of Theorem \ref{wellposed}, our goal is to recover $\beta(\cdot)$ on $[0,Y]$ given $m_1(0,\cdot)$, $m_3(0,\cdot)$ 
on $[0,2Z]$ where $\m(z,t)$ is the unique $C^1$ solution of (\ref{compt})-(\ref{compb}).
So our goal is the construction of the partial\footnote{ because we recover $\beta(\cdot)$ only on $[0,Y]$} inverse (on the range) of the injective nonlinear map
\begin{align*}
\F : \dot{C}^1[0,Z] & \to C^1[0,2Z] \times C^1[0,2Z]
\\
\beta( \cdot) & \to ( m_1(0, \cdot), m_3(0, \cdot) ).
\end{align*}

Fix an $(a_1(\cdot), a_3(\cdot))$ in the range of $\F$. For any $\beta(\cdot) \in \dot{C}^1[0,Y]$, consider the 
sideways problem
\begin{subequations}
\begin{align}
{\cal L} {\bf h}&={\bf 0} ~ \text{in} ~ {\Om}, \label{eq:hmsde} \\
h_1(0,t)=a_1(t),\quad h_2(0,t)&=0, \quad h_3(0,t)=a_3(t),\quad h_4(0,t)=0 ,\quad 0\le t\le 2Z,
\label{eq:hmsbc}
\\
h_3(z,z)&=\frac{(c-1)^2}{(c+1)^2}h_4(z,z),\quad 0\le z\le Y. 
\label{eq:hmscc}
\end{align}
\end{subequations}
Note that if $\h$ satisfies (\ref{compb}) then $\h$ satisfies (\ref{eq:hmscc}).
We show that (\ref{eq:hmsde})-(\ref{eq:hmscc}) has a unique $C^1$ solution for every 
$\beta (\cdot)  \in \dot{C}^1[0,Y]$. But more importantly, we then show, {\bf constructively}, that there is a unique 
$\beta (\cdot)  \in \dot{C}^1[0,Y]$ such that
\[
h_3(z,z) = \frac{ (c-1)}{2 (c+1)} \beta(z), \qquad 0 \leq z \leq Y.
\]
But we already know one such $\beta$. Since $(a_1(\cdot), a_3(\cdot))$ is in the range of $\F$, there is a $\beta$ and an
$\m$ which solves (\ref{compt})-(\ref{compb}) and such that $\F(\beta) = ( a_1 (\cdot), a_2(\cdot) )$. Since this $\m$ will also 
satisfy (\ref{eq:hmsde})-(\ref{eq:hmscc}), the unique $\beta$ found above must be the preimage of
$(a_1(\cdot), a_3(\cdot))$ under $\F$.

Of the two claims mentioned in the previous paragraph, the first one about the well-posedness of the CBVP  
(\ref{eq:hmsde})-(\ref{eq:hmscc}) will follow from a standard argument. The second claim, about a nonlinear 
problem, 
will be shown to be equivalent to the solution of a fixed point problem which will be studied by a contraction mapping 
argument.
This will 
take some work because we will have to extend the idea of a solution of (\ref{eq:hmsde})-(\ref{eq:hmscc}) to the case 
where $\beta \in L^2[0,Y]$ because our estimates will be $L^2$ estimates and hence to apply the contraction mapping theorem we will have to work square integrable $\beta$.


\subsection{Well-posedness for the sideways CBVP}

For this subsection, we drop the assumption that $\beta(0)=0, \beta'(0)=0$, for reasons which will become clear in the next subsection. For an arbitrary 4 dimensional vector function $\a(t) \in C^1[0,2Z]$, consider the CBVP
\begin{subequations}
\begin{align}
{\cal L} {\bf h}&={\bf 0} ~ \text{in} ~ {\Om}, \label{sidetc4}\\
h_3(z,z)&=\frac{(c-1)^2}{(c+1)^2}h_4(z,z),\quad 0\le z\le Y, \label{sidem}\\
\h(0,t) &=\a(t), \quad 0\le t\le 2Z.\label{sidebc4}
\end{align}
\end{subequations}
A simple but tedious calculation shows that the necessary (matching) conditions for (\ref{sidetc4})-(\ref{sidebc4}) to have a 
$C^1$ solution are that
\begin{subequations}
\begin{align}
(c+1)^2a_3(0) &=(1-c)^2a_4(0),\label{compat44}\\
(c+1)^2\left((1+c)a_3'(0)- \beta(0) (B \a)_3(0) )\right)
&=(c-1)^2\left((c-1)a_4'(0)+\beta(0) ( B \a)_4(0) )\right). \label{compat4}
\end{align}
\end{subequations}

\begin{proposition}[Well-posedness of the sideways CBVP] \label{prop:sideways}
If $\beta \in C^1[0,Y]$, and $\a(\cdot) \in C^1[0,2Z]$ satisfies (\ref{compat44})-(\ref{compat4}) then 
 (\ref{sidetc4})-(\ref{sidebc4}) has a unique solution ${\bf h}\in C^1({\Om})$. 
\end{proposition}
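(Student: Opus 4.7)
My plan is to parallel the proof of Proposition \ref{moregen1}: convert the CBVP (\ref{sidetc4})-(\ref{sidebc4}) into a Volterra-type system of integral equations by integrating $\L\h = 0$ along the four families of characteristics, solve by Picard iteration, and then upgrade the continuous solution to $C^1$ using the compatibility conditions (\ref{compat44})-(\ref{compat4}).

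First I would chart the characteristic geometry. Since $A = \mathrm{diag}(1, -1, c, -c)$, the characteristics through $(z, t) \in \Om$ traced backward in $z$ land on $z = 0$ at $(0, z+t)$, $(0, t-z)$, $(0, t+z/c)$ for $i = 1, 2, 3$ respectively, each pull-back time lying in $[0, 2Z]$ by the geometry of $\Om$. The $h_4$ characteristic lands on $(0, t - z/c)$ only when $t \geq z/c$; in the complementary subregion $\{(z, t) \in \Om : z \leq t < z/c\}$ it first meets the diagonal $t = z$ at $(z^*, z^*)$ with $z^* = (z - ct)/(1-c) \in [0, z]$. The characteristic $t = z/c$ emanating from the origin $O$ thus partitions $\Om$ into two subregions. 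Integrating $\L\h = 0$ along these characteristics produces the closed Volterra system
\[
h_i(z, t) = a_i(\tau_i) + \int_0^z \kappa_i\, \beta(\sigma)\,(B\h)_i(\sigma, \tau_i(\sigma))\, d\sigma, \qquad i = 1, 2, 3,
\]
where $\tau_i(\sigma)$ parametrizes the $i$-th characteristic through $(z,t)$ and $\kappa_i \in \{\pm 1, \pm 1/c\}$, together with an analogous formula for $h_4$ that uses the data $a_4$ in the upper subregion and the initial datum $h_4(z^*, z^*) = \tfrac{(c+1)^2}{(c-1)^2} h_3(z^*, z^*)$ (itself expressible through the $h_3$ formula) in the lower subregion. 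A standard Picard iteration combined with Gronwall's inequality then gives a unique continuous solution on $\Om$, its norm controlled by $c, Z$ and the $C^0$ norms of $\beta$ and $\a$, exactly as in the proof of Proposition \ref{moregen1}.

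The main obstacle is upgrading this continuous solution to $C^1$. Formal differentiation produces a second Volterra system for $\nabla \h$ whose analysis is identical, provided the effective boundary data for $\nabla \h$ is continuous across the splitting characteristic $t = z/c$, where the piecewise formula for $h_4$ must glue to first order. Because $t = z/c$ is itself a characteristic, smoothness of the two pieces propagates from the origin $O$, so it suffices to verify $C^0$ and $C^1$ matching at $O$ alone. A direct computation shows that (\ref{compat44}) is exactly the $C^0$ matching of the two one-sided formulas for $h_4$ at $O$, while (\ref{compat4}), after using the PDE to replace $z$-derivatives at the origin by $t$-derivatives of $\a$ together with the lower-order term $\beta(0)(B\a)(0)$, is the corresponding $C^1$ matching. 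With these in force the iteration for $\nabla \h$ converges and yields $\h \in C^1(\Om)$. Uniqueness may alternatively be deduced from Lemma \ref{lemma:energy} applied to the difference of two candidate solutions.
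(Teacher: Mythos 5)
Your proposal follows essentially the same route as the paper: reduce (\ref{sidetc4})--(\ref{sidebc4}) to a Volterra system by integrating along the four characteristic families, with the $h_4$ equation split across the line $t=z/c$ (using (\ref{sidem}) to express $h_4$ on the diagonal through $h_3$, which is then pulled back to $z=0$), solve by Picard/Gronwall, and invoke (\ref{compat44})--(\ref{compat4}) as the $C^0$ and $C^1$ matching conditions at the origin needed for $C^1$ regularity across the splitting characteristic. The characteristic geometry, the pull-back points, and the role of the compatibility conditions all agree with the paper's argument, so the proposal is correct and essentially identical in approach.
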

\begin{proof}
Define ${\bf r}({\bf h},z,t) :=\beta(z) B{\bf h}(z,t)$. Integrating $(\ref{sidetc4})$ along the characteristics and using the 
boundary conditions $(\ref{sidem})-(\ref{sidebc4})$, it is clear that proving Proposition \ref{prop:sideways} is equivalent to 
proving 
that the following system of integral equations has a unique $C^1$ solution (see Figure \ref{i12}); here $
P(z,t)$ is an arbitrary point in $\Om$.
\begin{subequations}
\begin{align}
h_1(z,t)&=\int_0^zr_1({\bf h},y,z+t-y)~dy+a_1(s_C), \label{hh14} \\
h_2(z,t)&=\int_0^zr_2({\bf h},y,y+t-z)~dy+a_2(s_D),\\
h_3(z,t)&=\frac{1}{c} \int_0^zr_3\left({\bf h},y,\frac{z+ct-y}{c}\right)~dy+a_3(s_E), \label{v34} \\
h_4(z,t)&=\left\{
     \begin{array}{lr}
       \frac{1}{c}\int_{0}^z r_4 \left( \h,y,\frac{y+ct-z}{c}\right)~dy+a_4(s_F) & \text{if}~z\le ct\\
      \frac{1}{c} \int_{y_H}^zr_4\left(\h,y,\frac{y+ct-z}{c}\right)~dy
          +\frac{(1+c)^2}{(1-c)^2}( \frac{1}{c} \int_0^{y_H}r_3(\h,y,s_F - \frac{y}{c})~dy
          +a_3(s_F))  & \text{if}~z\ge ct.
     \end{array}
   \right.
 \label{hh44}
\end{align}
\end{subequations}
Here $s_E, s_C,y_H, \cdots$ are the $s,y$ coordinates of $E,C,H, \cdots$ in Figure \ref{i12}.
\begin{figure}[!h]
\centering
\subfigure{
\epsfig{file=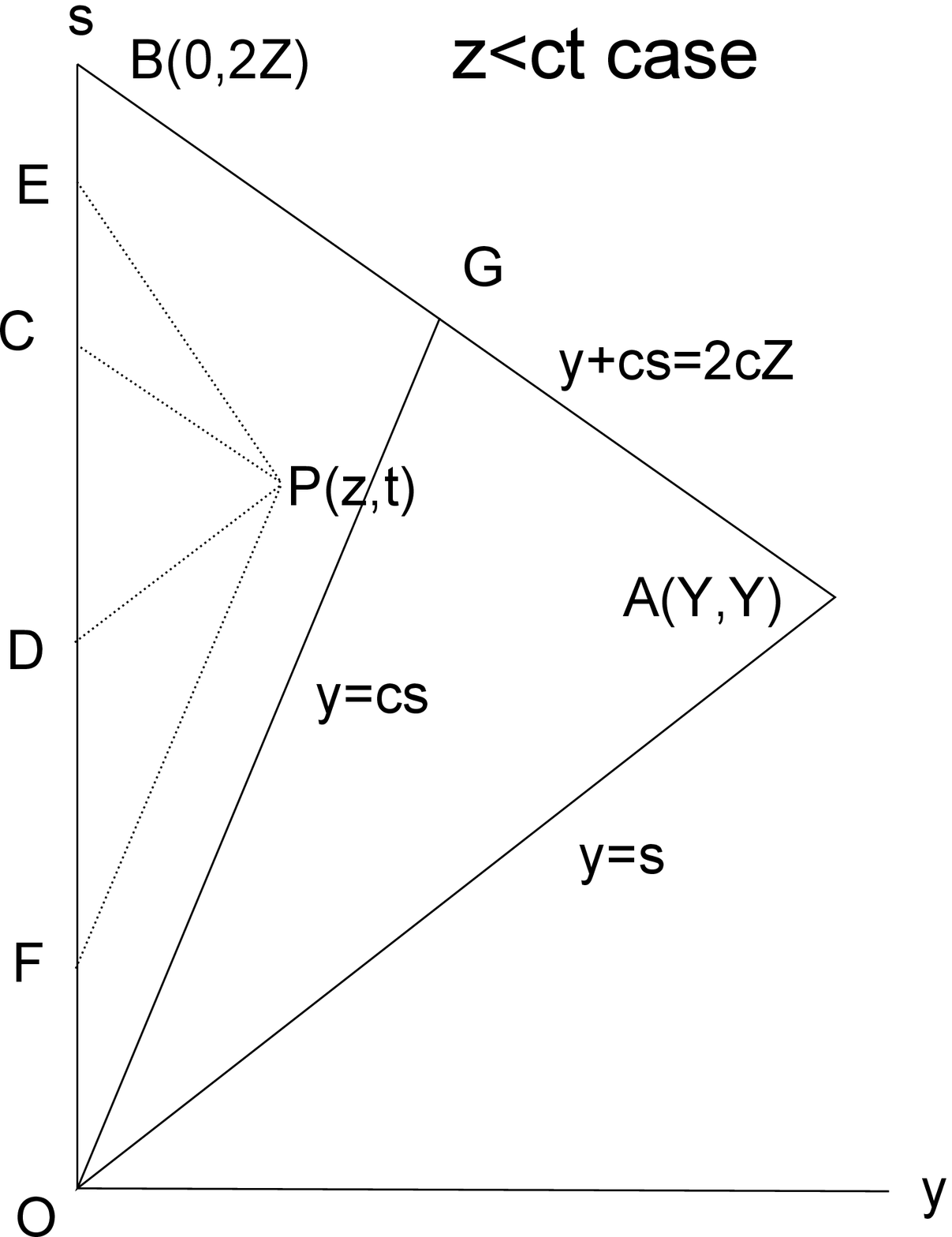, height=2.4in}
}
\hspace{0.2in}
\subfigure{
\epsfig{file=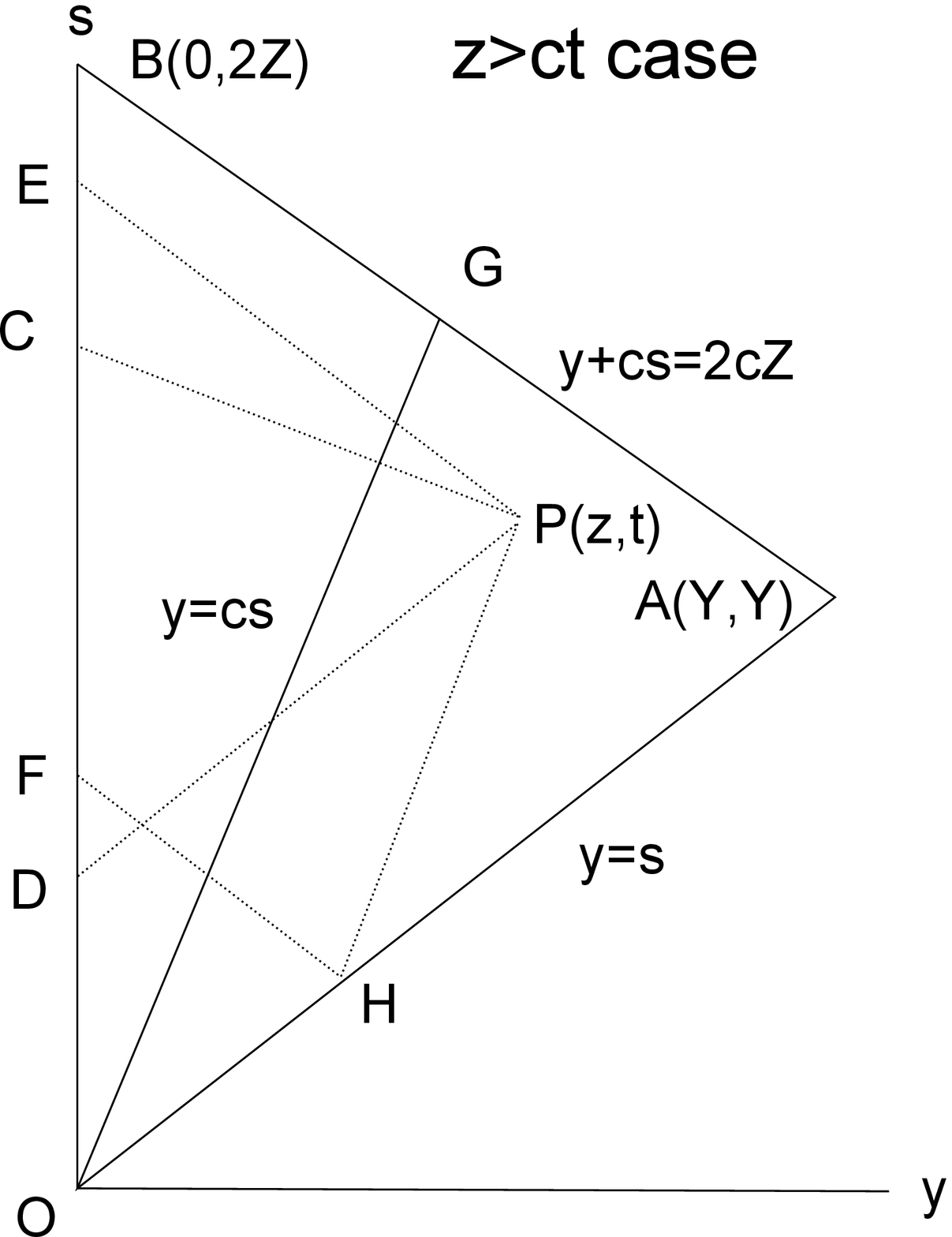, height=2.4in}
}
\caption{Leftward moving lines through $P(z,t)$ with slopes $\pm 1$ and $\pm 1/c$}
\label{i12}
\end{figure}

Again, the existence and uniqueness of a unique $C^1$ solution for (\ref{hh14})-(\ref{hh44}) may be proved by standard arguments 
for Volterra equations or one may use the method in section 2.5 of \cite{MR1185075}.
Of course, because of the piecewise nature of the fourth integral equation (\ref{hh44}), some
calculations are needed to confirm the $C^1$ regularity of $h_4$ and the matching conditions
(\ref{compat44})-(\ref{compat4}) will be required for the $C^1$ regularity at the origin.
\end{proof}


To set up the fixed point problem later in this section, we give meaning to and prove the existence of the unique 
solution  of  (\ref{sidetc4})-(\ref{sidebc4}) when $\beta(\cdot) \in L^2[0,Y]$. Further, we show that this solution has an 
$L^2$ trace on
the line $t=z$.

\begin{proposition}\label{prop:sideways-lipshitz}
If $\a \in C^1[0,2Z]$ satisfies (\ref{compat44}), (\ref{compat4}), and $\h \in C^1(\Om)$ is the 
corresponding solution of  (\ref{sidetc4})-(\ref{sidebc4}),  then the solution map $\S$ and the solution trace map $\T$
\\
\parbox{2in}
{ 
\begin{align*}
\S: C^1[0,Y] & \to C^1(\Om)
\\
\beta( \cdot) & \to \h(\cdot, \cdot)
\end{align*}
}
~~and~~
\parbox{2in}
{
\begin{align*}
\T : C^1[0,Y] & \to C^1[0,Y]
\\
\beta(\cdot) & \to h_3(z,z)
\end{align*}
}
\\
are locally Lipschitz continuous with respect to the $L^2$ norms on the domain and the 
codomains\footnote{The set containing the range}.
\end{proposition}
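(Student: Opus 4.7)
The plan is to apply Lemma \ref{lemma:energy} to the difference $\p := \h - \th$ of the two $C^1$ solutions of (\ref{sidetc4})-(\ref{sidebc4}) corresponding to $\beta$ and $\betat$ with the same boundary data $\a$. A direct subtraction, using $\L\h = 0$ and $(\partial_t - A\partial_z - \betat B)\th = 0$, shows that
\[
\L\,\p \,=\, (\beta-\betat)\,B\,\th, \qquad \p(0,t) = 0, \qquad p_3(z,z) = \frac{(c-1)^2}{(c+1)^2}\,p_4(z,z),
\]
so $J(\p,0)=0$ and $\p$ satisfies the diagonal matching condition (\ref{poundb}).

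The first step is a uniform bound on $J(\th, y)$ as $\betat$ ranges over the ball $\|\betat\|_{L^2[0,Y]} \le R$. Since $\th$ solves the unforced equation (with $\betat$ replacing $\beta$) together with (\ref{poundb}), the second assertion of Lemma \ref{lemma:energy}, applied with $\epsilon := c(1-c)^3/(1+c)^4$, gives
\[
J(\th, y) \,\le\, e^{4\sqrt{Y\|\betat\|}/(c\epsilon)}\,J(\th, 0), \qquad y \in [0,Y],
\]
with $J(\th, 0)$ depending only on $\a$. This yields a uniform bound $J(\th, y) \le C_1 = C_1(R, \a)$ on $[0,Y]$.

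Next, apply the first assertion of Lemma \ref{lemma:energy} to $\p$ with the same $\epsilon$ and, say, $\lambda = 1$. The pointwise inequality $|\th|^2 \le (c\epsilon)^{-1}(\tilde h_1^2 + \tilde h_2^2 + c\tilde h_3^2 + c\epsilon \tilde h_4^2)$ combined with $|B\th|^2 \cleq |\th|^2$ controls the forcing:
\[
\iint_{OCDB} |\L\p|^2 \, dy\, dt \,\cleq\, \int_0^z (\beta - \betat)^2(y)\, J(\th, y)\, dy \,\cleq\, C_1 \, \|\beta - \betat\|_{L^2[0,Y]}^2.
\]
With this choice of $\epsilon$, condition (\ref{poundb}) forces the boundary integrand in (\ref{LRHSO}) to dominate $p_3^2(y,y)$. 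Gronwall's inequality, whose kernel $4|\beta|/(c\epsilon) + 1/(c\epsilon)$ has $L^1[0,Y]$-norm bounded by $\sqrt{Y}R/(c\epsilon) + Y/(c\epsilon)$ uniformly for $\|\beta\|_{L^2} \le R$, then produces
\[
J(\p, z) + \int_0^z p_3^2(y, y)\, dy \,\le\, C_2(R, \a) \, \|\beta - \betat\|_{L^2[0,Y]}^2, \qquad z \in [0, Y].
\]

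The trace estimate and (\ref{sidem}) immediately give the local Lipschitz continuity of $\T$, and integrating $J(\p, z)$ over $z \in [0, Y]$ yields an $L^2(\Om)$ bound on $\p$ establishing the local Lipschitz continuity of $\S$. The main obstacle is the $y$-uniform bound on $J(\th, y)$ in terms of $\|\betat\|_{L^2}$ only; this is precisely where the diagonal matching condition (\ref{poundb}) is indispensable, because without the sign control it provides, the boundary term in (\ref{LRHSO}) could not be absorbed, and one would be forced into an $L^\infty$ or $C^1$ (rather than $L^2$) dependence on $\betat$.
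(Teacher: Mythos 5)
Your proposal is correct and follows essentially the same route as the paper: the same decomposition $\p=\h-\th$ with $\L\p=(\beta-\betat)B\th$, the same uniform bound on $J(\th,\cdot)$ via (\ref{jmz04}), the same application of (\ref{LRHSO}) with $\lambda=1$ and $\ep=c(1-c)^3/(1+c)^4$, and the same Gronwall step yielding $J(\p,z)+\int_0^z p_3^2(y,y)\,dy \cleq \|\beta-\betat\|_{L^2[0,Y]}^2$. The only (harmless) differences are bookkeeping: an inessential constant in your bound on the Gronwall kernel, and your slightly more explicit final sentence explaining how the trace and $L^2(\Om)$ estimates give the Lipschitz continuity of $\T$ and $\S$.
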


\begin{proof}

Suppose $\beta, \tilde{\beta} \in C^1[0,Y]$ and $\| \beta\|^2_{L^2[0,Y]} \leq K$,  $\| \tilde{\beta}\|^2_{L^2[0,Y]} \leq K$ 
and WLOG we assume that $\| \a\|_{L^2[0,2Z]} \leq K$.  
Let ${\bf h},{\bf \tilde h}\in C^1({\Om})$ be the solutions of $(\ref{sidetc4})-(\ref{sidebc4})$ corresponding to $\beta,
{\tilde \beta}$.

Applying Lemma \ref{lemma:energy} to $\th$, from (\ref{jmz04}) we have 
\begin{align}
J(\th,z)\le C   J(\th,0) =: C_0, \quad \forall z\in [0,Y], \label{c04}
\end{align}
where $C_0$ depends only on $c,Z,K$. 

If we define $\p = \h - \th$ then ${\bf p}$ satisfies ${\cal L} {\bf p}=(\beta-{\tilde \beta})B{\bf \tilde h}$ with ${\bf p}(0,t)=0$ for $t\in [0,2Z]$ and
\[
p_3(z,z) = \frac{(c-1)^2}{(c+1)^2} \, p_4(z,z),  \quad 0\le z\le Y.
\] 
Hence, applying Lemma \ref{lemma:energy} to $\p$ and taking $\lambda=1$,
$\ep = \frac{c(1-c)^3}{(1+c)^4}$,   from (\ref{LRHSO}) we obtain (see Figure \ref{CDest})
\begin{align*}
J({\bf p},z) +  \int_{OC} p_3^2(y,y) \, dy 
&\le \iint_{OCDB}|({\cal L} {\bf p})(y,t)|^2~dx \, dt
+\frac{1}{c\epsilon}\int_0^z(1+4|\beta(y)|)J({\bf p},y)~dy
\\
& \leq 4 \int_0^z ( \beta - \tilde{\beta})^2(y) \, J(\tilde{\h}, y) \, dy + 
\frac{1}{c\epsilon}\int_0^z(1+4|\beta(y)|)J({\bf p},y)~dy
\\
&  \leq 4 C_0 \int_0^z ( \beta - \tilde{\beta})^2(y) \, dy + 
\frac{1}{c\epsilon}\int_0^z(1+4|\beta(y)|)J({\bf p},y)~dy
\\
\end{align*}
where we used (\ref{c04}) in the last step. Hence, using Gronwall's inequality, we have
\begin{align*}
J({\bf p},z) +  \int_{OC} p_3^2(y,y) \, dy
&\le C \, \int_0^Y (\beta-{\tilde \beta})^2(y) ~dy
\end{align*}
with the constant dependent only on $c,Z,K$. This is enough to prove the proposition.
\end{proof}

Since $C^1$ is dense in $L^2$ and $\S$ and $\T$ are locally Lipschitz continuous in the $L^2$ norm, $\S$ and $\T$ have
unique continuous extensions $\St$ and $\Tt$
\\
\parbox{2.5in}
{ 
\begin{align*}
\St: L^2[0,Y] & \to L^2(\Om)
\\
\beta( \cdot) & \to \h(\cdot, \cdot)
\end{align*}
}
~~and~~
\parbox{2.5in}
{
\begin{align*}
\Tt : L^2[0,Y] & \to L^2[0,Y]
\\
\beta(\cdot) & \to h_3(z,z).
\end{align*}
}
\\
For $\beta \in  L^2[0,Y]$, $\St \beta$ is a candidate for a weak solution of 
(\ref{sidetc4})-(\ref{sidebc4}) and
$\Tt \beta$ is the candidate for the trace of this solution on $t=z$. Of course, we must first define what we mean by a 
weak solution of (\ref{sidetc4})-(\ref{sidebc4}).

For arbitrary $\h, \n \in C^1(\Om)$, from (\ref{eq:ulv}) and the divergence theorem  (see Figure \ref{CDest}) we have
\begin{align*}
\iint_{\D} (\L \n)^T \h + \n^T \L \h ~dx \, dt
=
\int_{AB} \n^T(cI-A) \h ~dt - \int_{OA} \n^T(I+A) \h ~dt  +  \int_{OB} \n^T A \h ~dt.
\end{align*}
Now $cI-A$ is a diagonal matrix with only the first, second and fourth diagonal entries being non-zero while $I+A$ is a 
diagonal matrix with only the first, third and fourth diagonal entries being non-zero. So keeping in mind (\ref{sidem}) the following seems an appropriate definition of a weak solution of (\ref{sidetc4})-(\ref{sidebc4}).
\begin{definition}
For arbitrary $\beta \in L^2[0,Y]$ and arbitrary $\a \in C^1[0,2Z]$ which satisfies (\ref{compat44}), (\ref{compat4}), 
we say that ${\bf h}\in L^2({\Om})$ is a 
weak solution of (\ref{sidetc4})-(\ref{sidebc4})  if (see Figure \ref{i12})
\begin{align}
\iint_{\D}(\n_t-A \n_z-\beta B \n)^T \h~dx \, dt=\int_{OB} \n^T A {\bf a} ~dt 
\label{def3}
\end{align}
for all $\n$ in 
\begin{align*}
\Lambda :=\{ \n \in C^1({\Om}) ~|~ n_1=0, (1-c)n_3+(1+c)n_4=0 ~ \text{on} ~ OA,~ n_1=n_2=n_4=0 ~ 
\text{on} ~ AB\}.
\end{align*}
\end{definition}

We now show the uniqueness and existence of the weak solution of  (\ref{sidetc4})-(\ref{sidebc4}).
\begin{proposition}[Weak solution of the sideways CBVP]\label{prop:weaksoln}
For any $\beta \in L^2[0,Y]$ and arbitrary $\a \in C^1[0,2Z]$ which satisfies (\ref{compat44}), (\ref{compat4}),
$\St \beta$ is the unique weak solution of (\ref{sidetc4})-(\ref{sidebc4}). Further, $\St \beta$ has an $L^2$ trace on $t=z$ 
which is $\Tt \beta$.
\end{proposition}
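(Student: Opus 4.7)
The overall plan is existence of the weak solution by approximation, uniqueness by a duality/density argument on the classical adjoint, and the trace identification directly from the Lipschitz estimate in Proposition \ref{prop:sideways-lipshitz}.

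For existence (that is, $\St\beta$ is a weak solution), take $\beta_n\in C^1[0,Y]$ with $\beta_n\to\beta$ in $L^2[0,Y]$ and let $\h_n:=\S\beta_n$ be the classical solution of Proposition \ref{prop:sideways}; by construction $\h_n\to\St\beta$ in $L^2(\Om)$. For any $\n\in\Lambda$, applying the pointwise identity (\ref{eq:ulv}) with $\L_n:=I\pa_t-A\pa_z-\beta_n B$ and the divergence theorem gives the boundary identity displayed just above the definition of weak solution, with $\L$ replaced by $\L_n$. The $AB$ integral vanishes because $n_1=n_2=n_4=0$ there and $(cI-A)_{33}=0$; the $OA$ integral vanishes because $h_{n,3}(y,y)=\tfrac{(c-1)^2}{(c+1)^2}h_{n,4}(y,y)$ combined with $(1-c)n_3+(1+c)n_4=0$ makes the integrand identically zero. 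What remains is $\iint_{\Om}(\L_n\n)^T\h_n\,dz\,dt=\int_{OB}\n^TA\a\,dt$. Writing $\L_n\n=\L\n-(\beta-\beta_n)B\n$, the error term is bounded by $C(\n)\|\beta-\beta_n\|_{L^2[0,Y]}\|\h_n\|_{L^2(\Om)}$ via Cauchy--Schwarz in $z$ (the $t$-slices of $\Om$ having length at most $2Z$), which tends to zero, so $\St\beta$ satisfies (\ref{def3}).

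For uniqueness, let $\p:=\h-\h'$ be the difference of two weak solutions, so $\p\in L^2(\Om)$ with $\iint_{\Om}(\L\n)^T\p\,dz\,dt=0$ for every $\n\in\Lambda$; it suffices to show $\{\L\n:\n\in\Lambda\}$ is dense in $L^2(\Om)$. Given $\psi\in C_c^\infty(\Om^\circ)$, I would approximate $\beta$ by smooth $\beta_k\to\beta$ in $L^2[0,Y]$ and solve the classical adjoint CBVP $\L_k\n_k=\psi$ in $\Om$ with $\n_k\in\Lambda$ by integration along the four characteristic families $z\pm t=\textrm{const}$ and $z\pm ct=\textrm{const}$. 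Two conditions on $OA$, three on $AB$, and the automatic corner compatibility $\n_k(A)=0$ (forced by $n_4=0$ on $AB$ together with the $OA$ relation) reduce this to a coupled Volterra system of exactly the type treated in Proposition \ref{prop:sideways}, producing a unique $C^1$ solution. A sideways energy estimate modeled on Lemma \ref{lemma:energy}, propagated from the degenerate slice at $z=Y$ back toward $z=0$, then gives a uniform bound $\sup_z\|\n_k(z,\cdot)\|_{L^2_t}\le M$ depending only on $c,Z,\sup_k\|\beta_k\|_{L^2}$ and $\|\psi\|_\infty$. Consequently $\L\n_k-\psi=(\beta_k-\beta)B\n_k$ has $L^2(\Om)$ norm at most $CM\|\beta_k-\beta\|_{L^2[0,Y]}\to 0$, so $\L\n_k\to\psi$ in $L^2(\Om)$; therefore $\iint\psi^T\p\,dz\,dt=\lim_k\iint(\L\n_k)^T\p\,dz\,dt=0$ for every $\psi\in C_c^\infty(\Om^\circ)$, giving $\p=0$.

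For the trace, Proposition \ref{prop:sideways-lipshitz} furnishes $\T\beta_n=h_{n,3}(\cdot,\cdot)|_{t=z}\to\Tt\beta$ in $L^2[0,Y]$ along the same smooth approximating sequence, identifying $\Tt\beta$ as the $L^2$ trace of $\St\beta$ on $t=z$; the Lipschitz bound guarantees independence from the choice of sequence. The principal obstacle is producing the classical adjoint $\n_k\in\Lambda$ with the stated uniform $L^\infty_z L^2_t$ bound. The adjoint data live on two non-characteristic edges of $\Om$ meeting at the corner $A$, so assembling the Volterra system and transferring a Lemma \ref{lemma:energy}-type energy estimate with reversed orientation requires careful characteristic-by-characteristic bookkeeping and verification of corner compatibility; once set up, however, the argument closes exactly as in Proposition \ref{prop:sideways} and Lemma \ref{lemma:energy}.
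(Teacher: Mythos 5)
Your proposal follows essentially the same route as the paper: existence and the trace by approximating $\beta$ in $L^2$ and using the Lipschitz continuity of $\S$ and $\T$, and uniqueness by a duality argument that solves the classical adjoint CBVP (with smoothed $\beta_k$, via Volterra integral equations along the characteristics, subject to the $\Lambda$ boundary conditions) together with a uniform sideways energy bound of Lemma \ref{lemma:energy} type. The only cosmetic difference is in the last step of the uniqueness argument: you take the adjoint source to be an arbitrary $\psi\in C_c^\infty$ and conclude that the difference of weak solutions annihilates all test functions, whereas the paper takes the source to be an $L^2$-approximation $\h^k$ of the difference $\h$ itself and concludes $\|\h\|_{L^2(\Om)}^2=0$ directly; both versions rest on exactly the same two technical ingredients and close in the same way.
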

\begin{proof}
Given $\beta \in L^2[0,Y]$, we can find a sequence $\beta_k \in C^1[0,Y]$ which converges to $\beta$ in the $L^2$ norm.
Let $\h_k=\S \beta_k$ be the $C^1$ solution of (\ref{sidetc4})-(\ref{sidebc4}). Since $\S$ is locally Lipschitz, $\h_k$ will
 be a 
Cauchy sequence in $L^2(\Om)$ and hence has a limit $\h \in L^2(\Om)$; in fact this $\h$ defines $\St \beta$. 
Now $\h_k, \beta_k$ satisfy (\ref{def3}) for all $\n \in \Lambda$, so from the $L^2$ convergence it is clear that
(\ref{def3}) will hold for the $L^2$ limit of $\beta_k$ and $\h_k$. Hence $\St \beta$ is a weak solution of 
(\ref{sidetc4})-(\ref{sidebc4}). Further, the construction of $\St$ and $\Tt$ shows that $\Tt \beta$ is the trace of this 
solution on $t=z$.

It remains to prove the uniqueness of the weak solution. To show uniqueness it is enough to show that if $\h \in L^2(\Om)$,
$\beta \in L^2[0,Y]$ and
\[
\iint_{\D}(\n_t-A \n_z-\beta B \n)^T \h ~ dz \; dt  =0, \qquad \forall \n \in \Lambda
\]
then $\h=0$. 

From the density of $C^1$ in $L^2$, we can find sequences $\beta_k \in C^1[0,Y]$ and $\h^k \in C^1(\Om)$ whose $L^2$ 
limits are $\beta$ and $\h$ respectively. We show below that we can find $\n^k \in \Lambda$ such that 
$ \n^k_t - A \n^K_z - \beta_k B \n^k = \h^k$ in $\Om$ and further $\sup_{z \in [0,Y]} J(\n^k,z)$ is bounded above by a 
constant independent of $k$. 
Assuming this for the moment we have
\begin{align}
0 & = \iint_{\D} (\n^k_t - A \n^k_z - \beta B \n^k)^T \, \h ~ dz \, dt
\nn
\\
& = \iint_{\D} (\n^k_t - A \n^K_z - \beta_k B \n^k)^T \, \h ~ dz \, dt
+
\iint_{\D} (\beta - \beta_k) \, (B \n^k)^T \h  ~ dz \; dt
\nn
\\
& =  \iint_{\D} \h^T \h^k ~ dz \, dt + \iint_{\D} (\beta - \beta_k) \, (B \n^k )^T \h  ~ dz \; dt.
\label{eq:nhk}
\end{align}
Now (see Figure \ref{CDest})
\begin{align*}
\left |  \iint_{\D} (\beta - \beta_k) \,  (B\n^k)^T \h  ~ dz \; dt \right |
& 
\cleq \int_0^Y |(\beta - \beta_k)(z)| \, \int_{CD} \|\n^k(z,t)\| \, \|\h(z,t)\| \, dt
\\
& \cleq  \int_0^Y |(\beta - \beta_k)(z)| \, \sqrt{ J(\n^k,z) \, J(\h,z) } \, dz
\\
& \cleq \| \beta - \beta_k \|_{L^2[0,Y]} \, \left ( \int_0^Y J(\n^k,z) \, J(\h,z) \, dz \right )^{1/2}
\\
& \cleq  \| \beta - \beta_k \|_{L^2[0,Y]} \, \left ( \int_0^Y J(\h,z) \, dz \right )^{1/2}
\\
&=  \| \beta - \beta_k \|_{L^2[0,Y]} \, \|\h\|_{L^2(\Om)}
\end{align*}
which approaches $0$ as $k\to \infty$. Hence, from (\ref{eq:nhk}), taking the limit as $k \to \infty$, we obtain
$\|\h\|^2_{L^2(\Om)} =0$ and hence $\h=0$.

So it remains to show that if $\beta \in C^1[0,Y]$, then for any $\bold{F}(z,t) \in C^1(\Om)$ there is an $\n \in C^1(\Om)$ such that
(see Figure \ref{i56})
\begin{subequations}
\begin{align}
\n_t-A \n_z-\beta B \n &={\bf F} ~ \text{in} ~ {\Om}, \label{adjtt}\\
n_1=0,\quad (1-c)n_3+(1+c)n_4&=0 ~ \text{on} ~ OA,\label{adjmm}\\
n_1=n_2=n_4&=0 ~ \text{on} ~ AB; \label{adjbb}
\end{align}
\end{subequations}
further $\sup_{z \in [0,Y]} J(\n, z)$ is bounded above by a constant determined only by $\|\bold{F}\|_{L^2(\Om)}$, 
$\|\beta\|_{L^2[0,Y]}$, $Z$ and $c$.
\begin{figure}[!h]
\centering
\subfigure{
\epsfig{file=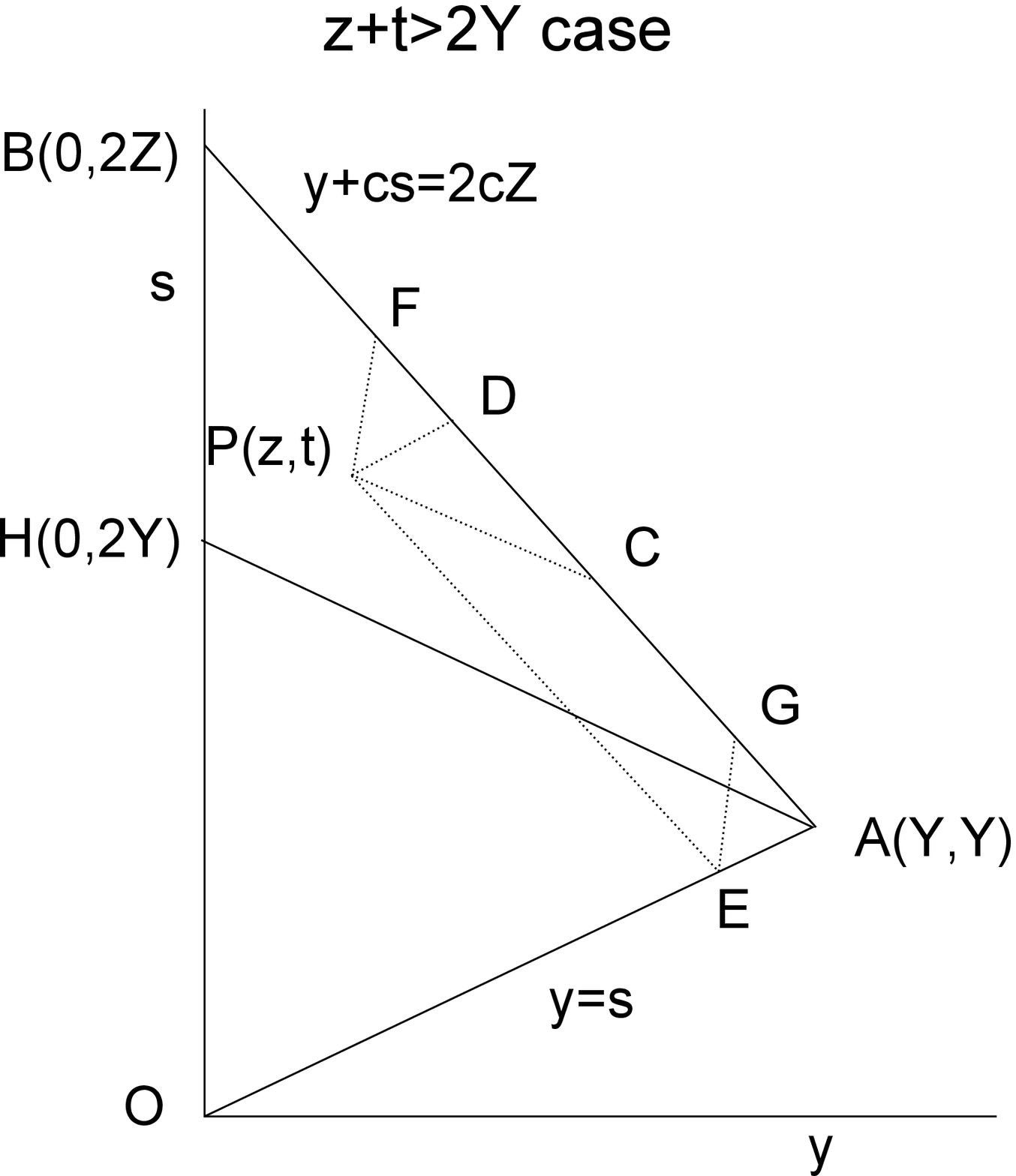, height=2.4in}
}
\hspace{0.3in}
\subfigure{
\epsfig{file=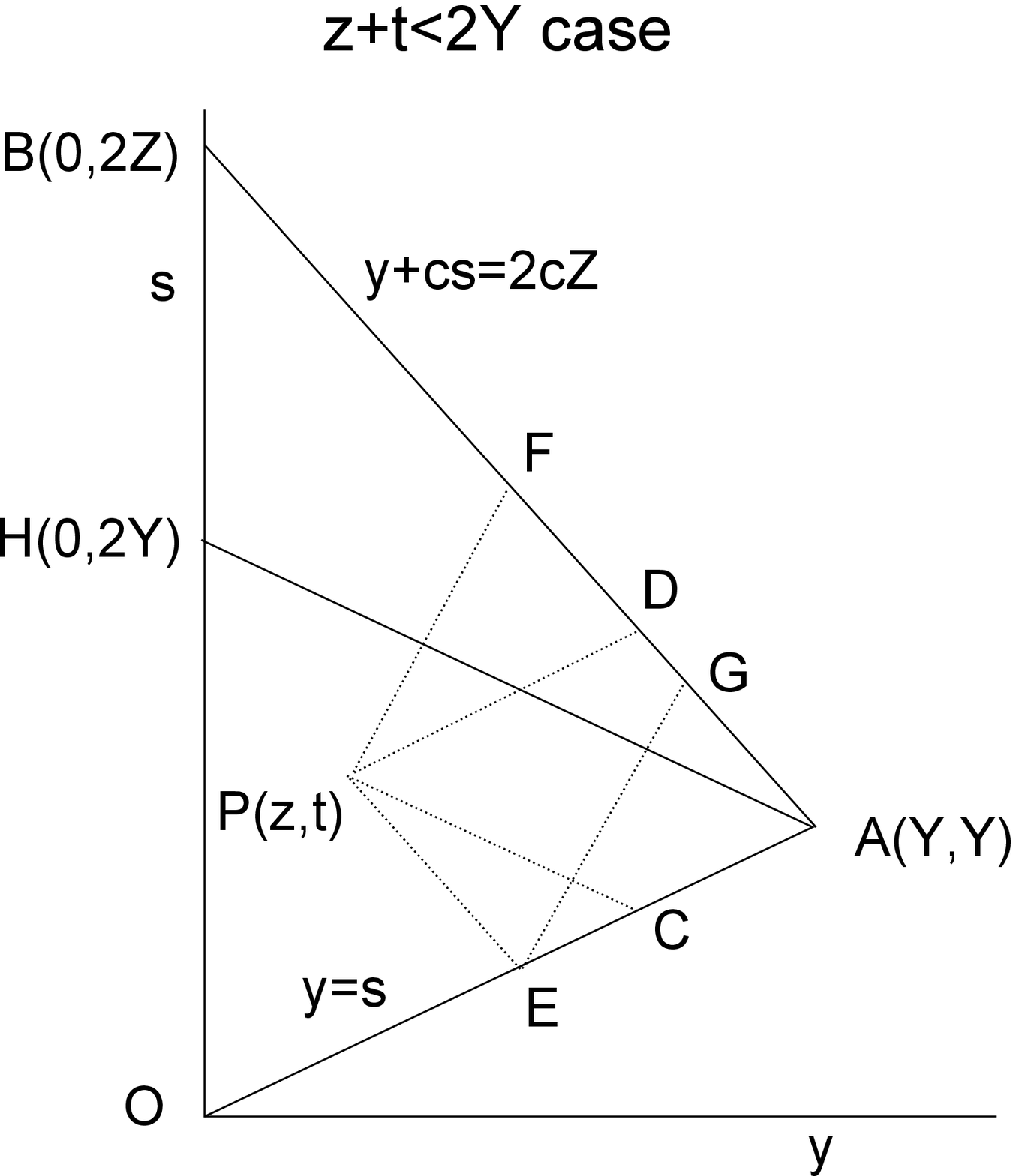, height=2.4in}
}
\caption{Rightward moving lines through $P(z,t)$ with slopes $\pm 1$ and $\pm 1/c$}
\label{i56}
\end{figure}

Define ${\bf r}({\bf n},z,t) :=\beta(z) B{\bf n}(z,t)+{\bf F}(z,t)$, and pick an arbitrary point $P(z,t)\in {\Om}$.
Integrating $(\ref{adjtt})$ along the characteristics and using the boundary conditions, we obtain the system of integral
equations
\begin{subequations}
\begin{align}
n_1(z,t)&=-\int_z^{y_C}r_1({\bf n},y,z+t-y)~dy, \label{vvv1v} \\
n_2(z,t)&=-\int_z^{y_D}r_2({\bf n},y,y+t-z)~dy,\\
n_3(z,t)&=- \frac{1}{c}\int_z^{y_E}r_3\left({\bf n},y,\frac{z+ct-y}{c}\right)~dy+
\frac{1+c}{c(1-c)}\int_{y_E}^{y_G}r_4\left({\bf n},y,\frac{y+cy_E-y_E}{c}\right)~dy, \label{vvv3} \\
n_4(z,t)&=\frac{1}{c}\int_z^{y_F}r_4\left({\bf n},y,\frac{y+ct-z}{c}\right)~dy.\label{vvv4v}
\end{align}
\end{subequations}
The existence of a $C^1$ solution of this system of integral equations holds by the usual argument.
The upper bound on $J(\n,z)$ may be obtained by using arguments similar to those used in the proof of Lemma 
\ref{lemma:energy}. The only difference is that the identity (\ref{LRHS}) must be integrated over the region CAD
 (see Figure \ref{CDest}) instead of the region OCDB and one should now choose $\ep = (1+c)^4/(1-c)^3.$

\end{proof}


\subsection{Local Reconstruction}

Suppose $\beta \in \dot{C}^1[0,2Z]$ and $\m(z,t)$ is the $C^1$ solution of (\ref{compt}) - (\ref{compb}). Given $m_1(0,t), 
m_3(0,t)$ on $[0,2Z]$, our goal is to reconstruct $\beta$ on $[0,Y]$. The reconstruction will occur piece by piece, first over
an interval $[0,\delta]$, then over $[\delta, 2 \delta]$, then $[2 \delta, 3 \delta]$, and so on, with the $\delta>0$ 
determined by  the value of $m_1(0,t), m_3(0,t)$ on $[0,2Z]$ . The crux of the global reconstruction is a local
reconstruction as described next.
\begin{figure}[!h]
\centering
\epsfig{file=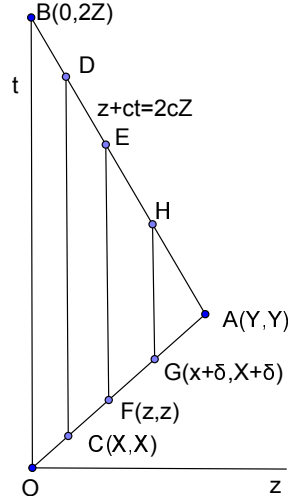, height=2.6in}
\caption{Local Reconstruction}
\label{mm5}
\end{figure}
Suppose $0 \leq X < Y$; given $\beta(X)$ and $\m(X, t)$ for all $t \in [X, (2cZ - X)/c]$ (that is given $\m$ on CD -
 see figure \ref{mm5}), we reconstruct $\beta$ on the interval $[X, X+\delta]$ for some $\delta>0$.

For arbitrary $X \in [0,Y]$ and $\delta>0$ such that $X+\delta\le Y$, define
\begin{align*}
{\tilde D}_{X,\delta} &:=\{(z,t)~| ~(z,t)\in {\tilde D},  X \le z\le X+\delta \}.
\end{align*}
Further, for any $K_X>0$, define the complete metric space (in the $L^2$ norm)
\begin{align*}
\Bb_X :=\{\beta \in L^2[X,X+\delta] ~|~ ||\beta||^2_{L^2[X,X+\delta]}\le K_X\},
\end{align*}
which, for an arbitrary fixed real number $\beta_*$, has a dense subset
\begin{align*}
\B_X :=\{\beta \in C^1[X,X+\delta] ~|~ ||\beta||^2_{L^2[X,X+\delta]}\le K_X, ~ \beta(X) = \beta_*\}.
\end{align*} 

For an arbitrary $\a(\cdot) \in C^1[X, (2cZ-X)/c]$, consider the CBVP
\begin{subequations}
\begin{align}
\h_t &= A\h_z + \beta B \h, ~ \qquad \text{in} ~ {\tilde D}_{X,\delta},  \label{subsidet}\\
h_3(z,z)&=\frac{(c-1)^2}{(c+1)^2}h_4(z,z),\qquad  X\le z\le X+\delta, \label{subsidem}\\
\h(X,t) &=\a(t), \qquad t \in [X, (2cZ-X)/c]. \label{subsideb}
\end{align}
\end{subequations}
For $\beta \in C^1[X,X+\delta]$, just as in Proposition \ref{prop:sideways}, one can verify that the matching conditions
on $\a(\cdot)$ needed for a $C^1$ solution of (\ref{subsidet})-(\ref{subsideb}) are
\begin{subequations}
\begin{align}
(c+1)^2  a_3(X) & =(1-c)^2a_4(X),\label{xmatch1}\\
(c+1)^2\left((1+c)a_3'(X)- \beta(X) (B\a)_3(X) \right) 
& =  (c-1)^2  \left( (c-1)a_4'(X)+ \beta(X) (B \a)_4(X) \right). \label{xmatch2}
\end{align}
\end{subequations}
Here is the important local reconstruction step.
\begin{proposition} \label{localRec}
Let $0\le X\le Y$, $\beta_* \in \R$ and $\a \in C^1[X, (2cZ-X)/c]$ such that (\ref{xmatch1}), (\ref{xmatch2}) hold.
There exists a $\delta>0$, $K_X>0$ and a unique $\beta \in \Bb_X$ with
\begin{align}
h_3(z,z)=\frac{c-1}{2(c+1)}\beta(z), \quad X\le z\le X+\delta,
\label{eq:h3cc}
\end{align}
where ${\bf h}(z,t)\in L^2({\tilde D}_{Z,\delta})$ is the unique weak solution of  (\ref{subsidet})-(\ref{subsideb}).
Actually, it is sufficient to choose any $K_X>0$ and $\delta>0$ so that
\begin{align}
K_X\ge \frac{8(1+c)^2}{(1-c)^2}J_X ,\quad  \delta \leq \text{min}\left(Y-X, \frac{c^2\epsilon^2}{256K_X}\right), 
\label{mindelta}
\end{align}
where $J_X= \|a_1\|^2_{L^2}+ \|a_2\|^2_{L^2}+c \|a_3\|^2_{L^2}+c \epsilon \|a_4\|^2_{L^2}$ and 
$\epsilon = \frac{c(1-c)^3}{(1+c)^4}$.
\end{proposition}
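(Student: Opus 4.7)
The strategy is to recast (\ref{eq:h3cc}) as a fixed-point equation on the complete metric space $(\Bb_X, \|\cdot\|_{L^2[X,X+\delta]})$ and invoke the Banach contraction mapping theorem. Define the shifted trace operator
\[
\Psi_X : \Bb_X \to L^2[X, X+\delta], \qquad \Psi_X(\beta)(z) \, := \, \frac{2(c+1)}{c-1} \, h_3(z,z),
\]
where $\h \in L^2(\tilde{D}_{X,\delta})$ is the unique weak solution of (\ref{subsidet})--(\ref{subsideb}) associated to $\beta$. This $\Psi_X$ is the natural local analog of $\Tt$: it is defined on the $C^1$-dense subset $\B_X$ via the shifted version of Proposition \ref{prop:sideways}, then extended continuously to $\Bb_X$ by the $L^2$-Lipschitz estimate of Proposition \ref{prop:sideways-lipshitz} (shifted to $[X, X+\delta]$). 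A fixed point $\beta^* \in \Bb_X$ of $\Psi_X$ is precisely a $\beta$ satisfying (\ref{eq:h3cc}), so the task reduces to showing that for the choices in (\ref{mindelta}), $\Psi_X$ maps $\Bb_X$ into itself and is a strict contraction.

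For the self-mapping step I would apply the sideways energy estimate (\ref{jmz04}) of Lemma \ref{lemma:energy}, translated from the origin to $z = X$, to $\h$ with $\ep = c(1-c)^3/(1+c)^4$. Since $\L \h = \mathbf{0}$ and (\ref{subsidem}) holds, this yields a bound of the form $\int_X^{X+\delta} h_3^2(y,y) \, dy \leq e^{4\sqrt{\delta K_X}/(c\ep)} J(\h, X) = e^{4\sqrt{\delta K_X}/(c\ep)} J_X$, since the boundary data (\ref{subsideb}) determine $J(\h, X) = J_X$. The condition $\delta \leq c^2\ep^2/(256 K_X)$ in (\ref{mindelta}) keeps the exponential factor uniformly bounded (say, by $2$) over $\Bb_X$; combined with $K_X \geq \tfrac{8(1+c)^2}{(1-c)^2} J_X$, this gives $\|\Psi_X(\beta)\|^2_{L^2[X,X+\delta]} \leq K_X$, so $\Psi_X(\Bb_X) \subset \Bb_X$.

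For the contraction step, take $\beta_1, \beta_2 \in \Bb_X$ with corresponding weak solutions $\h_1, \h_2$ and set $\p = \h_1 - \h_2$. Then $\p$ solves $\L \p = (\beta_1 - \beta_2) B \h_2$ with $\p(X, \cdot) = \mathbf{0}$ and still satisfies (\ref{subsidem}). Applying the full inequality (\ref{LRHSO}) of Lemma \ref{lemma:energy} to $\p$ on the shifted region, controlling the source term via Cauchy--Schwarz and the uniform bound on $J(\h_2, y)$ from the previous step, and invoking Gronwall as in the chain (\ref{cheese4})--(\ref{jpz4}) from the proof of Theorem \ref{stab}, I expect
\[
\int_X^{X+\delta} p_3^2(y,y) \, dy \, \cleq \, \delta \, J_X \, \|\beta_1 - \beta_2\|^2_{L^2[X,X+\delta]}.
\]
Since the left-hand side equals $\tfrac{(c-1)^2}{4(c+1)^2} \|\Psi_X(\beta_1) - \Psi_X(\beta_2)\|^2_{L^2}$, this produces a contraction constant $\alpha(\delta) \to 0$ as $\delta \to 0$; the quantitative cap $\delta \leq c^2\ep^2/(256 K_X)$ is precisely calibrated so that $\alpha(\delta) < 1$. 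Banach's theorem then delivers the unique fixed point $\beta^* \in \Bb_X$.

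The main obstacle is the coupled nature of the constraints on $K_X$ and $\delta$: the self-map bound forces $K_X$ to scale with $J_X$, which in turn caps $\delta$ like $O(1/K_X)$, and both invocations of Lemma \ref{lemma:energy} must respect a single pair of thresholds. Carefully tracking the constants through the Gronwall step, and verifying that the common choice (\ref{mindelta}) simultaneously tames the exponential in the self-map estimate and drives the contraction constant below $1$, is where the technical work lies. A subsidiary point is that (\ref{eq:h3cc}) must be interpreted through the $L^2$ trace supplied by Proposition \ref{prop:weaksoln}, since for $\beta \in \Bb_X \setminus \B_X$ the weak solution $\h$ has no classical trace on the diagonal; the Lipschitz continuity of the trace map $\Tt$ ensures this interpretation is consistent with the pointwise formula used on the dense $C^1$ subset where $\Psi_X$ is initially defined.
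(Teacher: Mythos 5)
Your plan is correct and follows essentially the same route as the paper: the paper likewise sets up the map $\beta \mapsto \frac{2(c+1)}{c-1}h_3(z,z)$ as a self-map of $\Bb_X$, proves the invariance and contraction estimates with the sideways energy Lemma \ref{lemma:energy}, and passes to $L^2$ coefficients via the weak-solution and trace machinery of Propositions \ref{prop:sideways-lipshitz} and \ref{prop:weaksoln}. The only detail you leave implicit is that getting the contraction constant below $1$ requires tuning the free parameter $\lambda$ in (\ref{LRHSO}) (the paper chooses $\lambda$ so that the constant equals $1/2$, which is what produces the $256$ in (\ref{mindelta})); with $\lambda$ fixed of order one the source term contributes no factor of $\delta$ and the map need not contract.
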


\begin{proof}
If $\beta \in C^1[X, X+\delta]$ then (extending $\beta$ arbitrarily to a function in $C^1[X,Y]$), from 
Proposition \ref{prop:sideways}, (\ref{subsidet})-(\ref{subsideb}) has a unique solution
$\h \in C^1[\D_{X,\delta}]$. So we may define the map 
\[
Q : \beta \to \frac{2(c+1)}{c-1}h_3(z,z)
\]
and our goal is to find a fixed 
point for this map. We do so by setting up $Q$ as a contraction map on a complete metric space.

First we show that for appropriate $\delta>0$ and $K_X>0$, if $\beta \in \B_X$ then $Q \beta \in \B_X$.
Given $\beta \in \B_X$, let $\h$ be the corresponding unique $C^1$ solution of (\ref{subsidet})-(\ref{subsideb}).
From Lemma 
\ref{lemma:energy}  (using $\L \h=0$ and letting $\lambda \to \infty$) we have  (see Figure \ref{mm5})
\begin{align*}
J(\h,z)+\int_{CF} & (2h_1^2+(1+c)h_3^2-\epsilon(1-c)h_4^2)~dy
\\ 
& \le J_X +\frac{4}{c\epsilon}\int_X^z |\beta(y)| \, J(\h,y)~dy,\quad z\in [X,X+\delta].
\end{align*}
If we take $\epsilon = \frac{c(1-c)^3}{(1+c)^4}$ and use the characteristic condition (\ref{subsidem}) then 
\begin{align*}
\int_{CF}((1+c)h_3^2-\epsilon (1-c)h_4^2)~dt=\int_{CF}h_3^2~dt 
\end{align*}
and hence
\begin{align}
J({\bf h},z)+\int_{CF}h_3^2\le J_X+\frac{4}{c\epsilon}\int_X^z|\beta(y)|J({\bf h},y)~dy,\quad z\in [X,X+\delta];
\label{also4}
\end{align}
so from Gronwall's inequality (for use later)
\begin{align}
J({\bf h},z)\le e^{4\sqrt{K_X\delta}/(c\epsilon)}J_X,\quad z\in [X,X+\delta]. \label{jhz4}
\end{align}
Define $J^*({\bf h}) :=\displaystyle \max_{z\in [X,X+\delta]}J({\bf h},z)$; then from $(\ref{also4})$ we have
\begin{align*}
J^*({\bf h})+\int_{CG}h_3^2~dz\le
2J_X+\frac{8}{c\epsilon}\int_X^{X+\delta}|\beta(y)|J^*({\bf h})~dy
\le 
2J_X+\frac{8\sqrt{\delta K_X}}{c\epsilon}J^*({\bf h}),
\end{align*}
which implies that
\begin{align*}
\left(1-\frac{8\sqrt{\delta K_X}}{c\epsilon}\right)J^*({\bf h})+\int_{CG}h_3^2~dz&\le 2J_X.
\end{align*}
So if $\delta\le \frac{c^2\epsilon^2}{64K_X}$, we have
\begin{align*}
||Q\beta||^2_{L^2[X,X+\delta]}=\frac{4(1+c)^2}{(1-c)^2}\int_X^{X+\delta}h_3(z,z)^2~dz
\le \frac{8(1+c)^2}{(1-c)^2}J_X \le K_X,
\end{align*}
if we choose $K_X \ge \frac{8(1+c)^2}{(1-c)^2}J_X$.

Summarizing, if we chose 
\beqn
\ep = \frac{c(1-c)^3}{(1+c)^4},  \qquad K_X \ge \frac{8(1+c)^2}{(1-c)^2}J_X, \qquad
\delta\le \frac{c^2\epsilon^2}{64K_X},
\label{eq:con1}
\eeqn
then we have a map
\begin{align*}
Q: \B_X&\mapsto \B_X,\\
(Q\beta)(z)&=\frac{2(c+1)}{c-1}h_3(z,z).
\end{align*}
We now show that $Q$ is a contraction if $\delta>0$ is small enough.

Suppose $\beta, \tilde{\beta} \in \B_X$ and let ${\bf h}, {\bf \tilde h}$ be the corresponding unique $C^1$ solutions of 
$(\ref{subsidet})-(\ref{subsideb})$. Define ${\bf p} :={\bf h}-{\bf \tilde h}$; then 
${\cal L} ({\bf p}) =(\beta-{\tilde \beta})B{\bf \tilde h}$, $\p=0$ on $z=X$, and $p_3, p_4$ satisfy (\ref{subsidem}) on $z=t$. 
Apply Lemma \ref{lemma:energy} but on the interval $[X, X+\delta]$ instead of the interval $[0,Y]$,
and choose $\epsilon = \frac{c(1-c)^3}{(1+c)^4}$. Noting that (as vector norms) $\|Bh\|^2 \leq 4 \|h\|^2 $ and
using (\ref{jhz4}) for $\tilde{h}$,
we obtain
\begin{align*}
J^*({\bf p}) + \int_{CG}p_3^2~dy & 
\le \frac{8 \lambda}{c \ep} \int_X^{X+\delta} (\beta-{\tilde \beta})^2(y)  J({\bf \tilde h},y) ~dy 
+ \frac{2}{c\epsilon}\int_X^{X+\delta} \left(4|\beta(y)|
+ \frac{1}{\lambda}\right) J^*({\bf p}) ~ dy  \nn 
\\
& \le \frac{8 \lambda  e^{4\sqrt{K_X\delta}/(c\epsilon)}J_X }{c \ep}        \int_X^{X+\delta}(\beta-{\tilde \beta})^2(y)~dy 
+ \frac{2\delta / \lambda+8\sqrt{\delta K_X}}{c\epsilon}J^*({\bf p}) 
\end{align*}
which implies that
\begin{align*}
\left(1-\frac{2\delta / \lambda+8\sqrt{\delta K_X}}{c\epsilon}\right)J^*({\bf p}) +  \int_{CG}p_3^2~dy
&\le  \frac{8 \lambda  e^{4\sqrt{K_X\delta}/(c\epsilon)}J_X }{c \ep}   \int_X^{X+\delta}(\beta-{\tilde \beta})^2(y)~dy.
\end{align*}
So choosing
\beqn
\delta \le \min\left(\frac{c\lambda\epsilon}{4},\frac{c^2\epsilon^2}{256K_X}\right),
\label{eq:con2}
\eeqn
we have
\begin{align*}
\int_{CG}p_3^2~dy
&\le  \frac{8 \lambda  e^{4\sqrt{K_X\delta}/(c\epsilon)}J_X }{c \ep}   \int_X^{X+\delta}(\beta-{\tilde \beta})^2(y)~dy
\end{align*}
which implies
\beqn
\|Q \beta - Q \tilde{\beta}\|_{L^2[X, X+\delta]}^2
\leq \sigma \| \beta - \tilde{\beta} \|_{L^2[X, X+\delta]}^2
\label{eq:contraction}
\eeqn
where
\[
\sigma = \frac{4 (1+c)^2}{ (1-c)^2} \, \frac{8 \lambda  e^{4\sqrt{K_X\delta}/(c\epsilon)} J_X }{c \ep}.
\]

The constraints on $K_X, \delta$ and $\lambda$ are given by (\ref{eq:con1}), (\ref{eq:con2}). We take 
\beqn
\lambda = \frac{c\epsilon(1-c)^2}{64(1+c)^2J_X} e^{- 4\sqrt{K_X\delta}/(c\epsilon)}
\label{eq:lambda}
\eeqn
then $\sigma = 1/2$; so we now have to choose $\delta>0$ small enough so that (\ref{eq:con1}) and (\ref{eq:lambda}) 
imply (\ref{eq:con2}). Some calculations\footnote{
Now from the last inequality in (\ref{eq:con1}) we have 
$
\dfrac{4 \sqrt{ K_X \, \delta}}{ c \ep} \leq \dfrac{1}{2};
$
hence, using  (\ref{eq:lambda}) and the second inequality in (\ref{eq:con1}), we have
\[
\lambda \geq \frac{c\epsilon(1-c)^2}{64(1+c)^2J_X} e^{-1/2} \geq \frac{c\epsilon(1-c)^2}{8(1-c)^2K_X} e^{-1/2}
\geq \frac{c \ep}{24 K_X}.
\]
So $ \dfrac{c \lambda \ep}{4} \geq \dfrac{c^2 \ep^2}{96 K_X}$ and hence (\ref{eq:con2}) holds if 
$ \delta \leq \dfrac{c^2 \ep^2}{256 K_X}$.
}
will show that choosing any $\delta$ with
\[
\delta \leq \text{min}\left(Y-X, \frac{c^2\epsilon^2}{256K_X}\right)
\]
will satisfy (\ref{eq:con1}), (\ref{eq:con2}). Hence choosing $K_X$ and $\delta>0$ which satsify (\ref{mindelta}), we 
have shown that $Q$ is a contraction map with $\sigma=1/2$ in (\ref{eq:contraction}).

So $Q$ has an extension $\Qt$ to the complete metric space $\Bb_X$, namely
\begin{align*}
\Qt: \Bb_X&\mapsto \Bb_X,\\
(\Qt \beta)(z)&=\frac{2(c+1)}{c-1}h_3(z,z)
\end{align*}
where $\h=\St \beta$ and $\h$ has an $L^2$ trace on $z=t$ (because of Proposition \ref{prop:weaksoln}). Further, $\Qt$ 
will also be a contraction map, and hence have a unique fixed point, which may be obtained by an algorithm.

\end{proof}

%
\subsection{Global reconstruction}
\noindent We defined the forward map $\F$
\begin{align*}
\F: \dot{C}^1[0,Z]&\mapsto C^1[0,2Z]\times C^1[0,2Z],\\
(\F\beta)(z)&=[m_1(0,t), m_3(0,t)]
\end{align*}
where ${\bf m}$ is the solution of $(\ref{compt})-(\ref{compb})$ and we have shown in Theorem \ref{stab} that $\F$ is 
injective. 

Since $[\phi(\cdot), \psi(\cdot)]$ are in the range of $\F$, there is a unique (unknown) $\beta(\cdot) \in \CdZ$ and a
corresponding unique (unknown) $C^1$ solution $\m$ of (\ref{compt})-(\ref{compb}) such that $m_1(0, t) = \phi(0,t)$,
$m_3(0,t) = \psi(0,t)$, $t \in [0,2Z]$. As per the hypothesis, we also assume that $\|\beta\|^2_{L^2[0,Y]} 
\leq K$ for some known $K \geq 0$, for this unique unknown $\beta$.

Applying Proposition \ref{localRec} with $X=0$, $\beta_*=0$, $\a=[\phi,0,\psi,0]$ (note that the $C^1$ matching conditions 
(\ref{xmatch1}), (\ref{xmatch2}) hold because we already know the existence of a $C^1$ solution, namely $\m$), 
we can find a $\delta_0>0, K_0>0$ and a unique
$\beta(\cdot) \in L^2[0,\delta]$ such that (\ref{eq:h3cc}) holds where $\h(z,t)$ is the solution of 
(\ref{subsidet})-(\ref{subsideb}). This $\beta$ must be same as $\F^{-1}[\phi,\psi]$ because the $\m$ corresponding
to $\F^{-1}[\phi,\psi]$ already satisfies (\ref{subsidet})-(\ref{subsideb}). Since Proposition \ref{localRec} was constructive, 
we have recovered $\beta$ on the interval $[0, \delta_0]$. Further, from Prop \ref{prop:sideways} applied to the interval
$[0, \delta_0]$ with $\a = [\phi, 0, \psi,0]$ we can can construct the unique $C^1$ solution of 
(\ref{sidetc4})-(\ref{sidebc4}) on the region $\tilde{D}_{X, \delta}$, which is the 
$\m$ corresponding to the $\beta= \F^{-1}[\phi, \psi]$.
Hence we now also have $\m(\delta_0,\cdot)$ on the interval $[\delta_0, (2cZ- \delta_0)/c]$ as well as $\beta(\delta_0)$.

Now we show the general step. Suppose, for some $X>0$ we are given $\m(X,\cdot)$  on the interval 
$[X, (2cZ- X)/c]$ as well as $\beta(X)$. Then the $C^1$ matching conditions are automatically satisfied and hence an 
application of Proposition \ref{localRec} with $\a(\cdot)= \m(X, \cdot)$, there exists a $\delta>0$ such that we can 
recover $\beta( \cdot)$ on the interval $[X, X+\delta]$. Then repeating the argument in the previous paragraph we can calculate $\m(X+\delta,t)$ for all $t \in [X+\delta,  (2cZ-X - \delta)/c]$. 

We can then apply this process repeatedly. To show that this process will end in a finite number of steps, we need to obtain 
a lower bound on the step size $\delta$ guaranteed by Proposition \ref{localRec}. Let $\beta = \F^{-1}[\phi, \psi]$ 
and $\m$ 
the corresponding solution of (\ref{compt})-(\ref{compb}); note that $\beta$ and $\m$ are unknown but
$\m(0,t)= [\phi(t), 0, \psi(t),0]$, $t \in [0,2Z]$ is given to us and $\|\beta\|^2_{L^2[0,Y]} 
\leq K$ for some known $K \geq 0$. Applying Lemma \ref{lemma:energy} for this $\beta$ and $\m$, (\ref{jmz04}) implies 
that
\beqn
J(\m,z) \leq e^{ 4 \sqrt{KY}/(c \ep)}, \qquad z \in [0,Y]
\label{eq:J}
\eeqn
where $\ep = c(1-c)^3/(1+c)^4$. Now at each iteration step we applied Proposition \ref{localRec} with
$\a = \m$, so from (\ref{eq:J})
\[
J_X = J(\m,X) \leq  e^{ 4 \sqrt{KY}/(c \ep)}
\]
and hence if we take 
\[
K_X = \frac{8(1+c)^2}{(1-c)^2}J_X
\]
then
\[
\frac{c^2\epsilon^2}{256K_X} = \frac{ c^2 (1-c)^2 \ep^2}{2048 (1+c)^2 J_X}
\geq  \frac{ c^2 (1-c)^2 \ep^2}{2048 (1+c)^2 } e^{-4 \sqrt{KY}/(c \ep)} =: \delta_*.
\]
So at every step we can choose a step size $\delta_*$ independent of $X$, except for the last step when the step size will be $\min( Y-X, \delta_*)$.

%

\section{Numerical reconstruction}
We now show the results from a numerical implementation of the scheme suggested by the proof of Theorem \ref{thm:recon}. 
The 
proof involved the construction of a fixed point for a contraction map $Q$; the fixed point is the limit of the sequence
$\beta_n$ where $\beta_0$ is chosen arbitrarily and $\beta_{n+1} = Q \beta_n$.

The data for the inverse problem, for the chosen $\beta$, was generated by solving the CBVP (\ref{compt})-(\ref{compb}) 
using the Crank-Nicolson method with interpolation to solve the ODE along the characteristics. The solution of the
inverse problem requires solving the sideways CBVP (\ref{eq:hmsde})-(\ref{eq:hmscc}), again using the Crank-Nicolson method 
with interpolation to solve the ODE along the characteristics. In the examples below we used
\[
c=0.5, ~~ Z = \frac{\pi}{2}
\]
and $N$ represents the number of subdivisions of $[0,Z]$.
If $\beta$ is the exact value and $\beta_{app}$ the numerical approximation from our inversion then we plot the $L^2$ error
$E_2$  and the relative $L^\infty$ error $E_\infty$ to judge the effectiveness of the algorithm where
\[
E_2 = \left ( \frac{Z}{N} \sum_{i=1}^N (\beta - \beta_{app})^2(z) \right )^{1/2},
\qquad
E_\infty = \max_{\beta(z_i) \neq 0} \left | \frac{(\beta - \beta_{app})(z_i)} {\beta(z_i)} \right |.
\]
 In the examples below, the calculated $\beta$ and the exact $\beta$ are very close compared to the scale 
 so we see only only graph even though we have drawn two.

We apply the inversion scheme to four examples and we start with a simple example with just a little bit of oscillation.

\noindent
{\bf Example 1}
Here $\beta(z)=3 z^2 \, \cos (10 z) \, \log(z+1)$, we use an initial guess $\beta_0(z) =z$ and $N=2^9$. The 
iterations converged in $17$ steps. Figures  \ref{error-smooth} and \ref{exact-smooth} show the accuracy of our 
reconstruction.
\begin{figure}[!h]
\centering
\includegraphics[trim=5mm 20mm 5mm 10mm, clip=true, scale=.4]{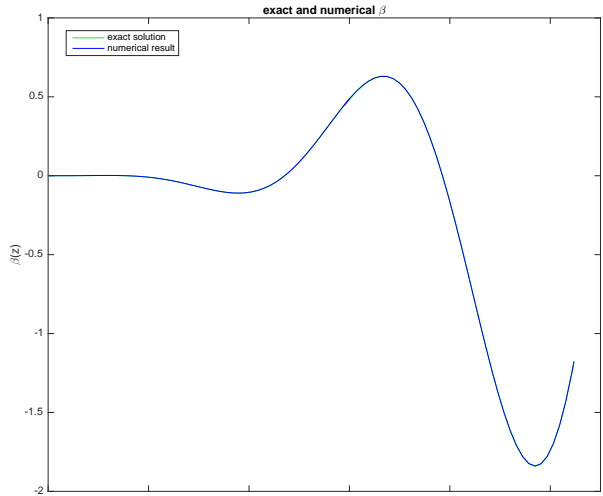}
\caption{Comparing exact $\beta$ with reconstructed $\beta$}
\label{exact-smooth}
\end{figure}
\vspace{0.2in}
\begin{figure}[!h]
\centering
\includegraphics[trim=0mm 0mm 0mm 0mm, clip=true, scale=.5]{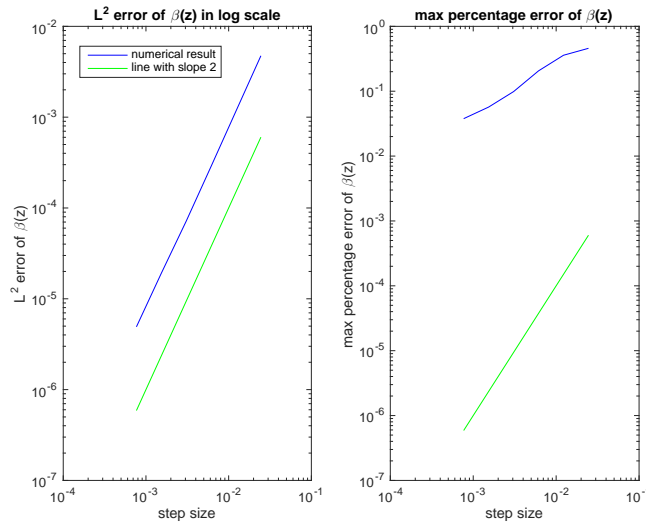}
\caption{$L^2$ error and relative $L^\infty$ error }
\label{error-smooth}
\end{figure}

\noindent
{\bf Example 2}
Here $\beta(z)=z\sin(100z)\log(z+1)$, 
 an initial guess $\beta_0(z) =z$ and $N=2^6, 2^7, \cdots, 2^{11}$. In all cases, the iterations 
converged in $17$ steps and Figures \ref{sine} and \ref{sineerror} reflect the accuracy of our reconstruction.

\begin{figure}[!h]
\centering
\includegraphics[trim=1mm 1mm 1mm 1mm, clip=true, scale=.5]{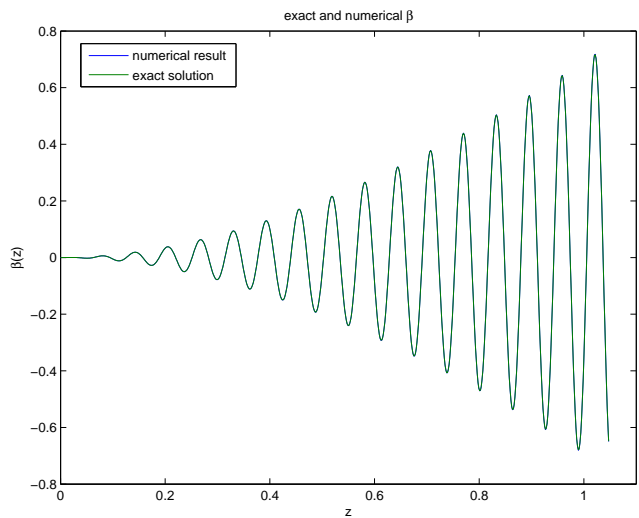}
\caption{Comparing exact $\beta$ with reconstructed $\beta$}
\label{sine}
\end{figure}

\begin{figure}[!h]
\centering
\includegraphics[trim=1mm 1mm 1mm 1mm, clip=true, scale=.5]{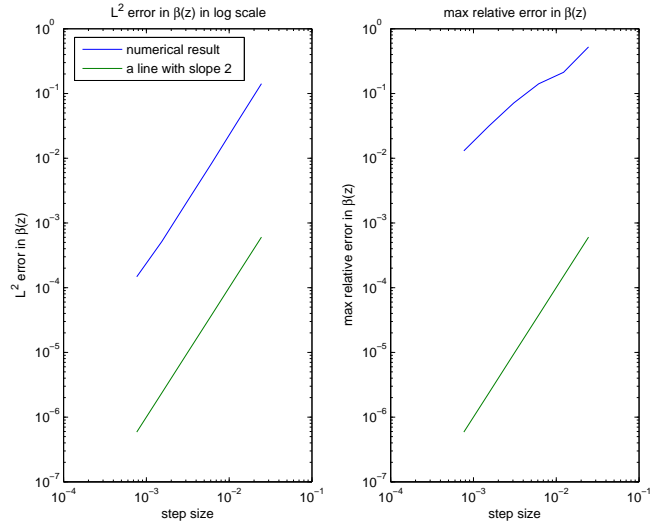}
\caption{$L^2$ error and relative $L^\infty$ error }
\label{sineerror}
\end{figure}
\vspace{0.2in}

\noindent{\bf Example 3} 
Here $\beta(z)=9z^2\cos(100z)\log(z+1)$, an initial guess $\beta_{0}(z)=z$, 
and $N=2^6, 2^7, \cdots, 2^{11}$. In all cases, the iterations 
converged in $14$ steps and Figures \ref{coserror} and \ref{cos} reflect the accuracy of our reconstruction.

\begin{figure}[!h]
\centering
\includegraphics[trim=1mm 1mm 1mm 1mm, clip=true, scale=.5]{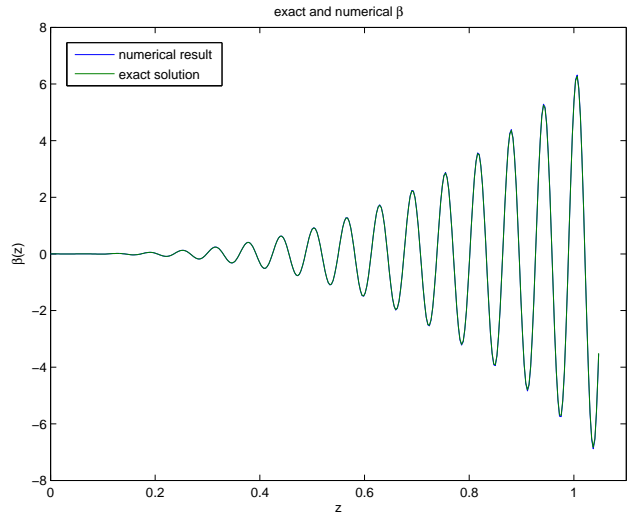}
\caption{Comparing exact $\beta$ with reconstructed $\beta$}
\label{coserror}
\end{figure}
\vspace{0.1in}

\begin{figure}[!h]
\centering
\includegraphics[trim=1mm 1mm 1mm 1mm, clip=true, scale=.5]{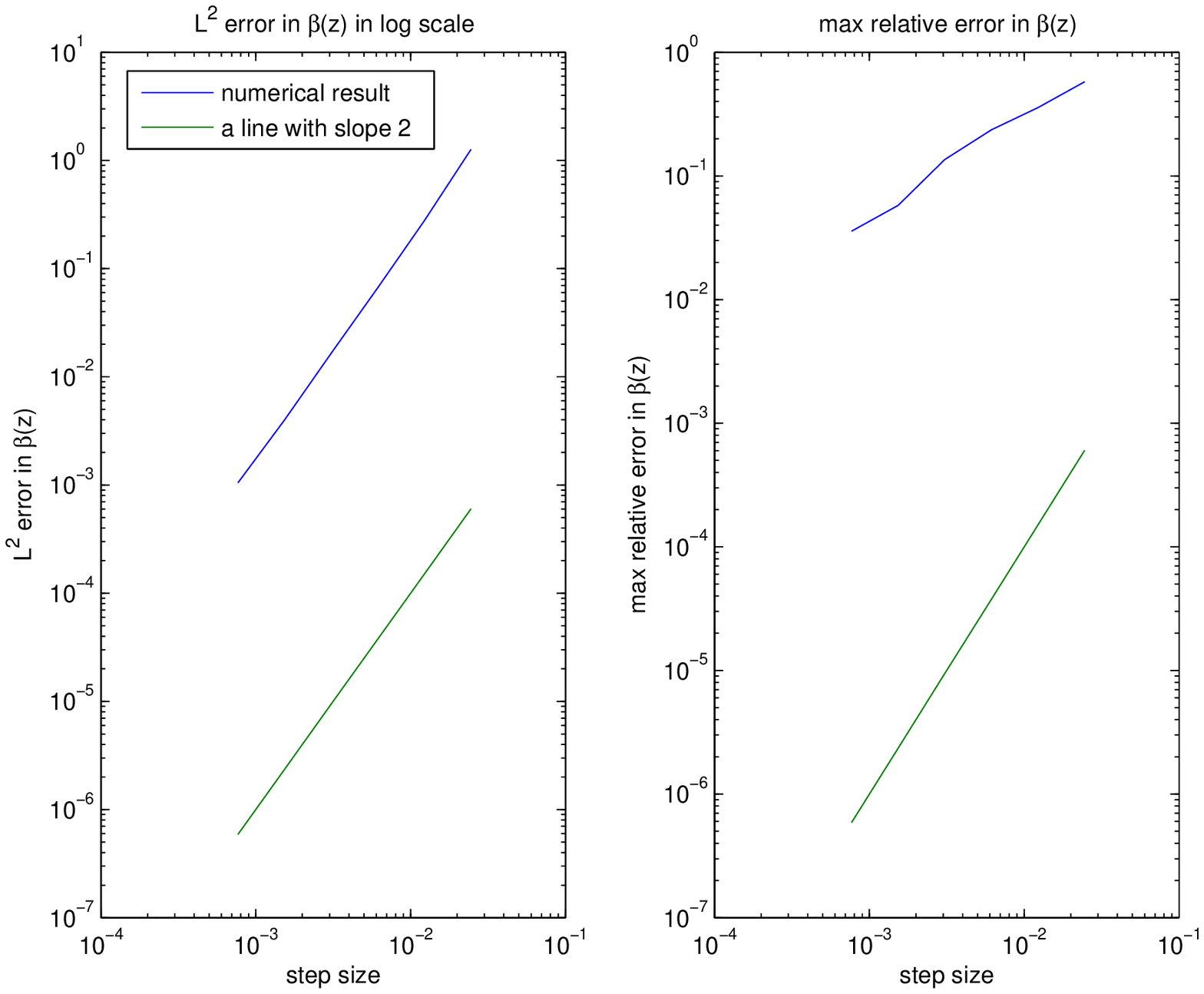}
\caption{$L^2$ error and relative $L^\infty$ error }
\label{cos}
\end{figure}

\noindent
{\bf Example 4}
Here $\beta(z)=z\sin(100z)e^{az}$ where $a$ is an integer, an initial guess $\beta_0(z)=z$. For the algorithm to 
converge $N$ had to be increased as $a$ increased  - see Table \ref{exp}.
\begin{table}[!h]
\centering
\begin{tabular}{|c|c|c|c|c|c|c|}
\hline
$a$ & $3$ & $4$ & $4$ & $5$ & $5$ & $6$ \\
\hline
N & $2^6$ & $2^7$ & $2^8$ & $2^9$ & $2^{10}$ & $2^{11}$\\
\hline
\end{tabular}
\caption{N value for the algorithm to converge}
\label{exp}
\end{table}

\section{Derivation of the model}\label{sec:model}
 
During the 2000 Mathematical Problems in Industry workshop at the University of Delaware, Greg Luther, then of Corning Inc., 
proposed the problem of modeling the twist in a birefringent optical fiber and determining this twist from the response of the 
fiber, measured at one end of the fiber, to an impulsive source applied at the same end of the fiber. He suggested \cite{corsi98}, 
\cite{corsi99} as possible sources for information. A few months after the workshop, a model was proposed in \cite{mpi2000}. 
Since this derivation is not readily available, it is included here.

Consider an optical fiber stretching along the $z$ axis and let $\E(z,t), \P(z,t)$ be the electric
field and the polarization at the point $z$ units away from the left end of the fiber; then
$\E$ and $\P$ obey Maxwell's equations
\begin{equation}
 \nabla^2 \textbf{E}-\nabla(\nabla \cdot \textbf{E})=
\frac{1}{c_0^2} \textbf{E}_{tt} +\frac{1}{\epsilon_0c_0^2} \textbf{P}_{tt} \label{maxwell}
\end{equation}
where $c_0$ is the speed of light in vacuum and $\epsilon_0$ is the permittivity of free space. Assume that
${\bf E}$ and ${\bf P}$ have no component along the fiber; since $\E$ and $\P$ depend only on $z$ and $t$, 
$(\ref{maxwell})$ reduces to
\begin{equation}
 \textbf{E}_{zz}=\frac{1}{c_0^2}\textbf{E}_{tt}+\frac{1}{\epsilon_0c_0^2}\textbf{P}_{tt}. \label{epmax}
\end{equation}
At every point in the fiber, there are two unit orthogonal vectors $\textbf{v}_1(z)$ and $\textbf{v}_2(z)$ perpendicular 
to the fiber, which represent the polarization directions of the two channels in the fiber. As the fiber twists along its length, 
the polarization directions change. Since $\textbf{v}_1$ and $\textbf{v}_2$ are orthogonal unit vectors in a plane perpendicular 
to the fiber, $d\textbf{v}_1/dz$ is orthogonal to $\textbf{v}_1(z)$ and hence $\dfrac{d\textbf{v}_1}
{dz}=\beta(z)\textbf{v}_2$ for 
some real valued function $\beta(z)$ and one may then show that $\dfrac{d\textbf{v}_2}{dz}=-\beta \textbf{v}_1$.

Since $\E(z)$ has no component along the fiber, we may write $\textbf{E}=E_1\textbf{v}_1+E_2\textbf{v}_2$. Further 
we assume that the polarization vector $\P$ is related to the electric field $\E$ via
\begin{align*}
 \textbf{P}=\epsilon_0(\alpha_1E_1\textbf{v}_1+\alpha_2E_2\textbf{v}_2)
\end{align*}
where $\alpha_1, \alpha_2$ are real constants. Substituting these representations of $\E$ and $\P$ into (\ref{epmax}), 
using the relations for the derivatives of $\v_1$ and $\v_2$, and matching the $\v_1$ and $\v_2$ components we obtain
\begin{subequations}
\begin{align}
 (E_{1z}-\beta E_2)_z-\beta (E_{2z}+\beta E_1)=\frac{1}{c_1^2}E_{1tt}, \label{ee1}\\
 (E_{2z}+\beta E_1)_z+\beta (E_{1z}-\beta E_2)=\frac{1}{c_2^2}E_{2tt} \label{ee2}
\end{align}
\end{subequations}
where it is assumed that $1+\alpha_i>0$ and we define $c_i = \dfrac{c_0}{ \sqrt{1+ \alpha_i}}$.

The second order hyperbolic system of equation (\ref{ee1}), (\ref{ee2}) has two speeds of propagation $c_1, c_2$ and
$E_1$, $E_2$ are, respectively, the waves propagating at these speeds. We rewrite this system as a first order system 
where we
distinguish between the right and left moving components of these waves. If we define $\M = [M_1, M_2, M_3, M_4]^T$
where
\begin{subequations}
\begin{align}
2M_1 &=E_{1z}-\beta E_2+\frac{1}{c_1}E_{1t}, \quad 2M_2 =E_{1z}-\beta E_2-\frac{1}{c_1}E_{1t}, \label{ME12}\\
2M_3 &=E_{2z}+\beta E_1+\frac{1}{c_2}E_{2t}, \quad 2M_4 =E_{2z}+\beta E_1-\frac{1}{c_2}E_{2t} \label{ME34}
\end{align}
\end{subequations}
then one may verify that $\M(z,t)$ satisfies (\ref{IVP11}); here, WLOG (because of scaling), for convenience we have 
assumed that the faster speed $c_1=1$ and the smaller speed $c_2=c$.

%

\bibliography{references}
\bibliographystyle{plain}

\end{document}